%%%%%	%%%%%	%
%				%
%	Preambule	%
%				%
%%%%%	%%%%%	%

\documentclass[a4paper,11pt]{article}

%
% Packages
%

\usepackage{cmap}

\usepackage[usenames,dvipsnames]{xcolor}
\definecolor{myblue}{rgb}{0.21, 0.34, 0.74}
\definecolor{mygrey}{rgb}{0.55, 0.57, 0.67}
\definecolor{myred}{rgb}{0.79, 0.0, 0.09}

\usepackage[pdftex, colorlinks=true, bookmarksopen=true, linkcolor=blue, citecolor=blue]{hyperref}

\usepackage[T1]{fontenc}
\usepackage[utf8]{inputenc}

\usepackage{lmodern}
\usepackage{libertine}

\usepackage{enumitem}
\usepackage{bbm}
\usepackage{mathrsfs}
\usepackage{amssymb}
\usepackage{amsmath}
\usepackage{upgreek}
\usepackage{mathtools}

\usepackage{amsthm}
\usepackage[nottoc,notlot,notlof]{tocbibind}

\usepackage[noabbrev,capitalize,nameinlink]{cleveref}

\usepackage{comment}
\usepackage[font=small,labelfont=bf]{caption}

\theoremstyle{plain}
\newtheorem{theorem}{Theorem}[section]

\newtheorem{proposition}[theorem]{Proposition}
\newtheorem{lemma}[theorem]{Lemma}

\newtheorem{remark}[theorem]{Remark}

\usepackage{authblk}

\setlength{\affilsep}{0.0em}
\setcounter{Maxaffil}{10}

\usepackage{subcaption}

%\usepackage[notcite, notref, color]{showkeys} % Show the keys
%\renewcommand\showkeyslabelformat[1]{\scalebox{.8}{\normalfont\footnotesize\ttfamily#1}\hspace{-.5em}}
%\colorlet{labelkey}{blue!30}
%\usepackage{refcheck}

\usepackage[svgnames,pdf]{pstricks}

\usepackage{wrapfig}
\usepackage{upgreek}
\usepackage{bm}
\usepackage[scr=boondoxo]{mathalfa}
\usepackage{comment}
\usepackage[font=small,labelfont=bf]{caption}

\interfootnotelinepenalty=10000

%
% Command definition
%

\mathchardef\mhyphen="2D % Define a "math hyphen"

\renewcommand{\leq}{\leqslant}
\renewcommand{\geq}{\geqslant}

\numberwithin{equation}{section}

%
% Meta-data
%

\title{ Constructive approximate transport maps with normalizing flows  }

\author{Antonio Álvarez-L\'opez}
\affil{Universidad Aut\'onoma de Madrid}
\author{Borjan Geshkovski}
\affil{Inria \& Sorbonne Université}
\author{Domènec Ruiz-Balet}
\affil{Université Paris Dauphine}

\date{ \today }

%---- Content ----%

\begin{document}
\setlist[itemize,enumerate]{left=0pt}

%
% Title
%

\maketitle

%
% Abstract
%

\begin{abstract}

We study an approximate controllability problem for the continuity equation and its application to constructing transport maps with normalizing flows. 
Specifically, we construct time-dependent controls $\theta=(w, a, b)$ in the vector field $x\mapsto w(a^\top x + b)_+$ to approximately transport a known base density $\rho_{\mathrm{B}}$ to a target density $\rho_*$. The approximation error is measured in relative entropy, and $\theta$ are constructed piecewise constant, with bounds on the number of switches being provided. Our main result relies on an assumption on the relative tail decay of $\rho_*$ and $\rho_{\mathrm{B}}$, and provides hints on characterizing the reachable space of the continuity equation in relative entropy. 
	
\bigskip

\noindent \textbf{Keywords.}\quad Normalizing flows, approximate controllability, optimal transport, Pinsker inequality.

\medskip

\noindent \textbf{\textsc{ams} classification.}\quad \textsc{35Q49, 93C15, 68T07, 93C20, 46N10, 49Q22}.
\end{abstract}
	
\thispagestyle{empty}

\setcounter{tocdepth}{2}
\tableofcontents

%
% Intro
%

\section{Introduction}

By viewing discrete layers as a continuous time variable, the framework of \emph{neural ODEs}---proposed by~\cite{weinan2017proposal} and made feasible in computing practice by~\cite{he2016deep,chen2018neural}---has significantly influenced the study and application of neural networks.
They are transparent objects: each (of many) data-point $x\in\mathbb{R}^d$ is propagated through the flow of the Cauchy problem
\begin{equation} \label{eq: neural.ode.drb}
\begin{cases}
    \dot{x}(t) = v(x(t), \theta(t)) &\text{ for } t\in[0, T]\\
    x(0) = x;
\end{cases}
\end{equation}
the velocity field is, canonically, a perceptron 
\begin{equation} \label{eq: two.layer.mlp}
 v(x,\theta) \coloneqq w\left(a^\top x+b\right)_+,
\end{equation}
with $\theta=(w,a,b)\in\mathbb{R}^{d}\times\mathbb{R}^d\times\mathbb{R}\cong\mathbb{R}^{2d+1}$ being the parameters of the network. (Natural generalizations of \eqref{eq: two.layer.mlp} can be devised, by replacing the inner product by a matrix multiplication or discrete convolution for example.)
We will consistently assume that $\theta(\cdot)\in L^\infty(0, T;\mathbb{R}^{2d+1})$, ensuring the applicability of the Cauchy-Lipschitz theorem.
A neural network can thus be interpreted as the flow map $\Phi^T_\theta:\mathbb{R}^d\to\mathbb{R}^d$ of \eqref{eq: neural.ode.drb} at time $t=T$, yielding a representation $x(T)$ of each data point $x$. This representation is then fed into a last layer that corresponds to a classical machine learning task such as linear or logistic regression, in order to predict the corresponding label. 

Besides serving as a useful abstraction of discrete neural networks, this formalism has had tremendous impact on density estimation through the guise of \emph{normalizing flows} \cite{grathwohl2018ffjord}. This is a popular methodology in machine learning as evidenced by \cite{papamakarios2021normalizing, kobyzev2020normalizing, lipman2022flow, albergo2023stochastic, gabrie2022adaptive, pooladian2023multisample} and the references therein, and goes back at least as \cite{tabak2010density, tabak2013family, dinh2014nice, tomczak2016improving}.
To estimate an unknown probability density $\rho_*$, of which samples are known, one takes a simple initial probability density $\rho_{\mathrm{B}}$, and matches the solution of the continuity (or conservative transport) equation
\begin{equation} \label{eq: cont.eq}
\begin{cases}
\partial_t \rho(t, x) + \mathrm{div}\Big(v(x,\theta(t))\rho(t,x)\Big)=0 &\text{ in } [0, T]\times\mathbb{R}^d\\
\rho(0, x) = \rho_{\mathrm{B}}(x) &\text{ on } \mathbb{R}^d
\end{cases}
\end{equation}
to the known samples. Specifically, given $n$ iid samples $x_1,\ldots, x_n$ of the unknown density $\rho_{*}$, one solves 
\begin{equation} \label{eq: empirical.mle}
    \max_{\theta}\frac{1}{n}\sum_{i=1}^n \log \rho_{\mathrm{B}}(\Phi^{-T}_{\theta}(x_i)) + \log|\mathrm{det}\nabla\Phi^{-T}_{\theta}(x_i)|. 
\end{equation}
(We shall keep this presentation formal, and do not detail what constraints are imposed on $\theta$ to ensure existence of maximizers.)
This is the problem of \emph{maximum (log-)likelihood estimation} on the data. Here $\Phi^{-T}_\theta$ designates the end-time flow map of \eqref{eq: neural.ode.drb} ran backwards in time, from $T$ to $0$; the eponym "normalizing" then stems from the fact that  $\rho_{\mathrm{B}}$ is most often a Gaussian probability density in practice. 

If $\rho_*$ were to be known, then \eqref{eq: empirical.mle} is the empirical version of
\begin{equation} \label{eq: pop.mle}
    \max_{\theta} \int\rho_*(x)\left(\log \rho_{\mathrm{B}}(\Phi^{-T}_\theta(x))+\log|\mathrm{det}\nabla\Phi_\theta^{-T}(x)|\right)\, \mathrm{d} x.
\end{equation}
By the change of variable theorem, we have $\rho_{\mathrm{B}}(\Phi^{-T}_\theta(x))|\mathrm{det}\nabla\Phi_\theta^{-T}(x)|=\rho(T,x)$, and thus 
\begin{align}
    \eqref{eq: pop.mle} &= \max_\theta \int \rho_*(x) \log\rho(T,x)\, \mathrm{d} x \nonumber \\
    & = \min_\theta \int \rho_*(x)\log \frac{\rho_*(x)}{\rho(T,x)}\, \mathrm{d} x - \int \rho_*(x)\log \rho_*(x)\, \mathrm{d} x. \label{eq: min.kl}
\end{align}
This is the question of minimizing the \emph{relative entropy} between $\rho_*$ and $\rho(T)$, or equivalently, the \emph{Kullback-Leibler divergence} between the measures $\mu_*$ and $\Phi^T_{\theta\#}\mu_{\mathrm{B}}$, 
henceforth denoted as
\begin{equation*}
\mathsf{KL}\left(\mu_*\,\|\,\Phi^T_{\theta\#}\mu_{\mathrm{B}}\right):=\int \rho_*(x)\log \frac{\rho_*(x)}{\rho(T,x)}\, \mathrm{d} x,
\end{equation*}
where $\rho_*=\frac{\, \mathrm{d}\mu_*}{\, \mathrm{d} x}$. It is a problem of general interest in Bayesian inference and sampling---all we do here is to parametrize it by densities given as solutions to \eqref{eq: cont.eq}. 

Our focus in this paper is the feasibility problem associated to \eqref{eq: min.kl}---namely, given an arbitrarily small $\varepsilon>0$, we wish to find $\theta$ (depending on $\varepsilon$) such that 
\begin{equation*}
    \int \rho_*(x)\log \frac{\rho_*(x)}{\rho(T,x)}\, \mathrm{d} x\leq \varepsilon.
\end{equation*}
This constitutes an \emph{approximate controllability} or \emph{approximate transportation} problem, which is \emph{nonlinear} in the controls $\theta$. Results for \eqref{eq: cont.eq} are only known in weaker\footnote{see \eqref{eq: pinsker} and \Cref{rem: metrics}.} topologies---such as $L^1$, $L^2$ or Wasserstein distances \cite{ruiz2023neural, ruiz2024control, marzouk2023distribution, alvarez2024interplay, geshkovski2024measure}---and, again in weaker topologies, also for continuity equations with vector fields that are linear in the controls $\theta$ \cite{alabau2012notes, raginsky2024some, duprez2019approximate, scagliotti2023normalizing}.
In addition to addressing the strong topology imposed by the relative entropy, we will require $\theta$ to be piecewise constant, with the aim of counting the number of discontinuities (switches). This is of interest, as these switches can be interpreted as analogous to discrete layers.
In the setting of such $\theta$, given an initial density $\rho_{\mathrm{B}}\in\mathscr{C}^0(\mathbb{R}^d)$, \eqref{eq: cont.eq} has a unique solution $\rho\in\mathscr{C}^0([0,T];L^1\cap L^\infty(\mathbb{R}^d))$ which is explicit and found by the method of characteristics (\Cref{lem: transport.multid.sol}).

\subsection{Main result} 

Our main result is as follows.

\begin{theorem} \label{thm:KLcontrolmultid}
Suppose $\mu_{\mathrm{B}}=\mathcal{N}(m_{\mathrm{B}},\Sigma_{\mathrm{B}})$ for an arbitrary mean $m_{\mathrm{B}}\in\mathbb{R}^d$ and covariance matrix $\Sigma_{\mathrm{B}}\in \mathscr{S}_{d}^+(\mathbb{R}).$  Let  $\rho_*$ be any probability density on $\mathbb{R}^d$ %such that $\rho_*\log\rho_*\in L^1(\mathbb{R}^d)$ 
and for some $M>0$ and $\sigma_\bullet>0$,
\begin{equation} \label{eq:condlowgaussmultid}
    \rho_*(x)\leq\frac{1}{(2\pi\sigma_\bullet^2)^{\frac{d}{2}}}e^{-\frac{\|x\|^2}{2\sigma_\bullet^2}}\eqqcolon\rho_\bullet(x) \hspace{1cm} \text{ for all }\|x\|\geq M.
    \end{equation}
Then for any $T>0$ and $\varepsilon>0$, there exists $\theta=(w,a,b):[0, T]\to\mathbb{R}^{2d+1}$, piecewise constant with finitely many switches, such that the corresponding solution $\rho$ to \eqref{eq: cont.eq} with data $\rho_{\mathrm{B}}=\frac{\, \mathrm{d}\mu_{\mathrm{B}}}{\, \mathrm{d} x}$ satisfies 
\begin{equation*} 
    \int \rho_*(x)\log\frac{\rho_*(x)}{\rho(T,x)}\, \mathrm{d} x\leq\varepsilon.
\end{equation*}
Equivalently,
\begin{equation*} 
\mathsf{KL}\left(\mu_*\,\|\,\Phi^T_{\theta\#}\mu_{\mathrm{B}}\right)\leq\varepsilon,
\end{equation*}
where $\rho_*=\frac{\, \mathrm{d}\mu_*}{\, \mathrm{d} x}$ and $\Phi^T_\theta:\mathbb{R}^d\to\mathbb{R}^d$ is the flow map of \eqref{eq: neural.ode.drb} at time $t=T$. 
\end{theorem}

We defer a detailed discussion on the setup and extensions of the above result to {\bf\Cref{sec: discussion}}, and instead briefly motivate the proof. Our starting point is \cite{ruiz2024control}, in which $L^1$-approximate controllability is shown:
\begin{equation*}
    \|\rho(T)-\rho_*\|_{L^1(\mathbb{R}^d)}=\left|\Phi^T_{\theta\#}\mu_{\mathrm{B}}-\mu_*\right|_{\mathsf{TV}}\leq \varepsilon.
\end{equation*}
The proof is entirely constructive and hinges on performing exact transportation between piecewise constant approximations of the initial and target densities, along with a continuity argument. This is achieved by moving and deforming each segment of the approximated initial density to match the approximated target density using a piecewise constant control, $\theta$. Consequently, the number of switches in $\theta$ corresponds to the number of segments in the approximation.
In higher dimensions, the proof extends by employing piecewise constant approximations over grids, which result in an exponential number of cells (and thus segments) in the dimension $d$. The argument proceeds via induction on the dimension. Since the number of cells grows exponentially with $d$, the number of switches in $\theta$ similarly scales exponentially with $d$.

One way to make use of \cite{ruiz2024control} is to employ a functional inequality linking  $\mathsf{KL}$ and $\mathsf{TV}$. A well-known example is the (Csiz\'ar-Kullback-)\emph{Pinsker} inequality 
\begin{equation} \label{eq: pinsker}
    |\mu_2-\mu_1|_{\mathsf{TV}}\leq\sqrt{2\cdot\mathsf{KL}(\mu_2\,\|\,\mu_1)}
\end{equation}
which holds for arbitrary probability measures $\mu_1, \mu_2$.
While the inequality sign in \eqref{eq: pinsker} is not in the desired direction so that we can straightforwardly apply it to conclude, it turns out that it can be reversed under stronger conditions on the tails of the measures (see \cite{verdu2014total,Sason2015OnRP,sason2016f}):

\begin{lemma}[Reverse Pinsker inequality] \label{lem:revpinsker1}
Let $\mu_1,\mu_2$ be probability measures on $\mathbb{R}^d$ such that $\mu_2\ll\mu_1$. 
Then
\begin{equation} \label{eq:reversepinsker1}
\mathsf{KL}(\mu_2\,\|\,\mu_1)\leq \frac12\cdot\frac{\log\left\|\frac{\, \mathrm{d}\mu_2}{\, \mathrm{d}\mu_1}\right\|_{L^\infty(\mathbb{R}^d)}}{1-\frac{1}{\left\|\frac{\, \mathrm{d}\mu_2}{\, \mathrm{d}\mu_1}\right\|_{L^\infty(\mathbb{R}^d)}}}\cdot|\mu_2-\mu_1|_{\mathsf{TV}}.
\end{equation}
\end{lemma}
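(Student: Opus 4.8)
The plan is to reduce the statement to a one-variable convexity inequality for the likelihood ratio. Write $f\coloneqq\frac{\diff\mu_2}{\diff\mu_1}$ (well-defined since $\mu_2\ll\mu_1$) and $L\coloneqq\left\|f\right\|_{L^\infty(\R^d)}$, so that $0\le f\le L$ holds $\mu_1$-a.e.\ and $\int f\diff\mu_1=1$. I would first dispose of the trivial cases: if $L=1$ then $f\equiv1$ $\mu_1$-a.e., hence $\mu_2=\mu_1$ and both sides vanish; if $L=+\infty$ the right-hand side is $+\infty$. So from now on $1<L<\infty$. Introducing $\psi(t)\coloneqq t\log t-t+1$ (with $\psi(0)\coloneqq1$), which is nonnegative and convex on $[0,\infty)$, the identity $\int(f-1)\diff\mu_1=0$ (both $\mu_1,\mu_2$ are probability measures) turns the left-hand side into $\mathsf{KL}(\mu_2\,\|\,\mu_1)=\int f\log f\diff\mu_1=\int\psi(f)\diff\mu_1$, while $|\mu_2-\mu_1|_{\mathsf{TV}}=\int|f-1|\diff\mu_1$ with the normalization consistent with \eqref{eq: pinsker}.

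The heart of the argument will be the pointwise bound
\begin{equation*}
\psi(t)\le\lambda(t-1)+C\,|t-1|\qquad\text{for all }t\in[0,L],
\end{equation*}
with the sharp constant $C\coloneqq\frac12\cdot\frac{\log L}{1-1/L}=\frac{L\log L}{2(L-1)}$ and the correction slope $\lambda\coloneqq C-1$. I would verify it by splitting at $t=1$. On $[1,L]$ the inequality becomes $\psi(t)\le(C+\lambda)(t-1)$, and since a direct computation gives $C+\lambda=2C-1=\frac{\psi(L)}{L-1}$, the right-hand side is exactly the chord of the convex function $\psi$ between $t=1$ (where $\psi=0$) and $t=L$, so the bound is just convexity. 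On $[0,1]$ the inequality becomes $\psi(t)\le(C-\lambda)(1-t)=1-t$, i.e.\ $t\log t\le0$, which is immediate.

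The proof then finishes by integrating the pointwise bound against $\mu_1$ and using $\int(f-1)\diff\mu_1=0$ to kill the $\lambda$-term, which yields $\mathsf{KL}(\mu_2\,\|\,\mu_1)\le C\,|\mu_2-\mu_1|_{\mathsf{TV}}$, i.e.\ \eqref{eq:reversepinsker1}; all integrals are finite because $\psi$ is bounded on $[0,L]$. I expect the only non-mechanical step to be identifying the affine correction: the probability constraint lets one subtract any multiple of $f-1$ at no cost, and the value $\lambda=C-1$ is precisely the one making the resulting function of $t$ admit the linear majorant $C\,|t-1|$ on all of $[0,L]$ --- equivalently, the one forcing equality at both endpoints $t=0$ and $t=L$, which also explains why the constant cannot be improved (it is attained in the limit by two-valued ratios $f\in\{0,L\}$, for which $\mu_1(f=0)=1-\tfrac1L$).
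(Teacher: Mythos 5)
Your proof is correct. Since the paper does not prove \Cref{lem:revpinsker1} but only cites \cite{verdu2014total,Sason2015OnRP,sason2016f}, there is no in-paper argument to compare against; your write-up supplies the missing self-contained derivation. The key points all check out: with $f=\frac{\diff\mu_2}{\diff\mu_1}$ and $L=\|f\|_{L^\infty}$, the identity $\mathsf{KL}(\mu_2\,\|\,\mu_1)=\int\psi(f)\diff\mu_1$ holds because $\int(f-1)\diff\mu_1=0$; the normalization $|\mu_2-\mu_1|_{\mathsf{TV}}=\int|f-1|\diff\mu_1$ agrees with the convention implicit in \eqref{eq: pinsker}; the constant $C=\tfrac{L\log L}{2(L-1)}$ matches $\tfrac12\cdot\tfrac{\log L}{1-1/L}$; and your pointwise inequality $\psi(t)\le\lambda(t-1)+C|t-1|$ on $[0,L]$ with $\lambda=C-1$ reduces on $[1,L]$ to the chord inequality for the convex $\psi$ (using $2C-1=\psi(L)/(L-1)$) and on $[0,1]$ to $t\log t\le0$. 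The edge cases $L=1$ (where $\mu_2=\mu_1$) and $L=+\infty$ are handled, and your sharpness observation ($f\in\{0,L\}$ with $\mu_1(f=L)=1/L$ gives equality) is a nice sanity check. This is essentially the standard convexity/chord argument underlying the cited reverse-Pinsker bounds, stated in a clean one-variable form.
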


Recall that the Radon-Nikodym theorem only ensures that $\frac{\, \mathrm{d}\mu_2}{\, \mathrm{d}\mu_1}$ is measurable.
Yet for \eqref{eq:reversepinsker1} to be non-void we need 
$$
\left\|\frac{\, \mathrm{d}\mu_2}{\, \mathrm{d}\mu_1}\right\|_{L^\infty(\mathbb{R}^d)}<+\infty.$$ 
(Note that this quantity is always $\geq1$ by definition.)
Suppose instead that $\mu_1$ and $\mu_2$ are absolutely continuous with respect to the Lebesgue measure. We are asking for 
$$
\left\|\frac{\, \mathrm{d}\mu_2}{\, \mathrm{d}\mu_1}\right\|_{L^\infty(\mathbb{R}^d)}=\mathop{\mathrm{esssup}}_{x\in\mathbb{R}^d}\frac{\rho_2(x)}{\rho_1(x)}<+\infty.$$ 
Therefore, to convert the $\mathsf{TV}$ approximation result of \cite{ruiz2024control} to a $\mathsf{KL}$ one, we also ought to ensure that
\begin{equation}\label{eq:condtails}
\mathsf{L}(\rho_*,\rho(T))\coloneqq\lim_{\|x\|\to\infty}\frac{\rho_*(x)}{\rho(T,x)}<+\infty,
\end{equation}
as well as $\rho(T,x)>0$  for all $x\in\mathbb{R}^d$.
Ensuring this condition is in essence the main novelty of our study. The complete proof can be found in {\bf \Cref{sec: proof.thm1}}.

At this point, we can also observe that $\mathsf{KL}$ is not symmetric with respect to its inputs, whence \Cref{thm:KLcontrolmultid} does not directly entail a result when the densities are swapped. Interestingly, minimizing the reverse-$\mathsf{KL}$ is a problem of interest in its own right and is referred to as \emph{variational inference} (see  \cite{wainwright2008graphical, blei2017variational, rezende2015variational, jiang2023algorithms} and the references therein for works on the topic). 
The following result turns out to be a simple modification of the proof of \Cref{thm:KLcontrolmultid}.

\begin{theorem} \label{thm:revKLcontrolmultid}
Suppose $\mu_{\mathrm{B}}=\mathcal{N}(m_{\mathrm{B}},\Sigma_{\mathrm{B}})$ for an arbitrary mean $m_{\mathrm{B}}\in\mathbb{R}^d$ and covariance matrix $\Sigma_{\mathrm{B}}\in \mathscr{S}_{d}^+(\mathbb{R}).$  Let  $\rho_*$ be any probability density on $\mathbb{R}^d$ such that $\rho_*>0$, $\rho_*\log\rho_*\in L^1(\mathbb{R}^d)$ and for some $M>0$ and $\sigma_\bullet>0$,
\begin{equation}\label{eq:condupgaussmultid}
    \rho_*(x)\geq\frac{1}{(2\pi\sigma_\bullet^2)^{\frac{d}{2}}}e^{-\frac{\|x\|^2}{2\sigma_\bullet^2}}\eqqcolon\rho_\bullet(x)\hspace{1cm} \text{ for all }\|x\|\geq M.
    \end{equation}
Then for any $T>0$ and $\varepsilon>0$, there exists $\theta=(w,a,b):[0, T]\to\mathbb{R}^{2d+1}$, piecewise constant with finitely many switches, such that the corresponding solution $\rho$ to \eqref{eq: cont.eq} satisfies 
\begin{equation*}
\int\rho(T,x)\log\frac{\rho(T,x)}{\rho_*(x)}\, \mathrm{d} x\leq\varepsilon.
\end{equation*}
Equivalently
\begin{equation*}
\mathsf{KL}\left(\Phi^T_{\theta\#}\mu_{\mathrm{B}}\,\|\,\mu_*\right)\leq\varepsilon,
\end{equation*}
where $\rho_*=\frac{\, \mathrm{d}\mu_*}{\, \mathrm{d} x}$ and $\Phi^T_\theta:\mathbb{R}^d\to\mathbb{R}^d$ is the flow map of \eqref{eq: neural.ode.drb} at time $t=T$. 
\end{theorem}

We provide the proof in {\bf \Cref{sec: proof.thm2}}.

\subsection{Discussion} \label{sec: discussion}

We turn to commenting the various assumptions made in \Cref{thm:KLcontrolmultid,thm:revKLcontrolmultid}. 
In {\bf\Cref{sec: generalities}}, we discuss estimates on the number of switches (\Cref{rem: switches}), the assumption of a Gaussian initial density (\Cref{rem:extKL}) and the possible removal of \eqref{eq:condlowgaussmultid} (\Cref{prop: xlogx}), considering different metrics (\Cref{rem: metrics}), and estimation from finitely many samples (\Cref{rem: statistics}). 
In {\bf\Cref{sec: linear.model}}, we discuss changing the model \eqref{eq: two.layer.mlp} to the one considered in \cite{scagliotti2023normalizing}, which is \emph{linear} in $\theta$.

\subsubsection{Generalities} \label{sec: generalities}

\begin{remark}[Number of switches] \label{rem: switches}
    The controls $\theta$ from \Cref{thm:KLcontrolmultid} have at most 
\begin{equation*}
\left\lceil \frac{2R(\varepsilon)}{h(\varepsilon)}\right\rceil^d(d+10)+2d
\end{equation*} 
switches, where $R(\varepsilon),h(\varepsilon)>0$ are such that
\begin{equation*}
    \int_{[-R(\varepsilon),R(\varepsilon)]^d}\rho_{\mathrm{B}}(x)\, \mathrm{d} x \wedge \int_{[-R(\varepsilon),R(\varepsilon)]^d}\rho_*(x)\, \mathrm{d} x >1-\varepsilon
\end{equation*}
and 
\begin{equation*}
    \int_{[-R(\varepsilon),R(\varepsilon)]^d}\left|\rho_{\mathrm{B}}^{h_\varepsilon}(x)-\rho_{\mathrm{B}}(x)\right|\, \mathrm{d} x\vee\int_{[-R(\varepsilon),R(\varepsilon)]^d}\left|\rho^{h_\varepsilon}_*(x)-\rho_*(x)\right|\, \mathrm{d} x<\varepsilon,
\end{equation*}
where $\rho_{\mathrm{B}}^{h_\varepsilon}$ and $\rho_*^{h_\varepsilon}$ are piecewise constant interpolants of $\rho_{\mathrm{B}}$ and $\rho_*$ on a regular grid of the hypercube $[-R(\varepsilon),R(\varepsilon)]^d$ with cells of spacing $h(\varepsilon)$. The exponential growth with $d$ of the number of switches is due to \cite[Theorem 1]{ruiz2024control}, where the error scales inversely with the cell-size %\footnote{\color{red}The volume of each cell is proportional to the error. The error scales inversely with the total number of cells} 
of a multidimensional grid.

In \Cref{thm:revKLcontrolmultid}, $\theta$ has at most 
\begin{equation*}
\left\lceil \frac{2R(\varepsilon)}{h(\varepsilon)}\right\rceil^d(d+10)+2d
\end{equation*} 
switches, where $R(\varepsilon)>0$ and $h(\varepsilon)>0$ are as in \Cref{thm:KLcontrolmultid}.
\end{remark}

\begin{remark}[Beyond Gaussians] \label{rem:extKL}
\Cref{thm:KLcontrolmultid,thm:revKLcontrolmultid} can be generalized beyond the case of Gaussian initial densities. For \Cref{thm:KLcontrolmultid}, it suffices to replace \eqref{eq:condlowgaussmultid} by the following three conditions:
\begin{itemize}
\item The densities $\rho_{\mathrm{B}}$ and $\rho_*$ have full support\footnote{If $t\mapsto\rho_{*}(ut)$ is compactly supported for some $u\in\mathbb{S}^{d-1}$, then  $\lim_{|t|\to\infty}\rho_{*}(tu)/\rho(T,tu)=0$ trivially. This means we only have to care about the directions in which the support is unbounded.}. Thus, $\rho_{\mathrm{B}}(x) = e^{-U_{\mathrm{B}}(x)}$ as well as $ \rho_*(x) = e^{-U_*(x)}$.
 \item There exists $M_*>0$ such that%\footnote{I think this is necessary. For example, if $d=1$ and $U_*(x)=\sqrt{|x|}$ then $U_\bullet = 0$.}
  \begin{equation}\label{eq:condlowmultid*}
  U_*(x)\gtrsim\|x\|\hspace{1cm}\text{for all }\|x\|>M_*.   
 \end{equation}
 \item Given $A\in\mathscr{M}_{d\times d}(\mathbb{R})$ with $\operatorname{spec}(A)\subset(0,+\infty)$, there exists $\lambda_A>0$ such that the lower convex envelope, defined for all $x\in\mathbb{R}^d$ by 
 \begin{equation*}
     \operatorname{conv}U_*(x)=\sup_{\substack{U\leq U_*\\U \mathrm{convex}}} U(x)
 \end{equation*}
 satisfies 
\begin{equation}\label{eq:condlowmultidB}\lim_{\|x\|\to \infty}\frac{\operatorname{conv}U_*(\lambda_A x)}{U_{\mathrm{B}}(A x)}=+\infty.\end{equation}
\end{itemize}
Similarly, to extend \Cref{thm:revKLcontrolmultid} we replace \eqref{eq:condupgaussmultid} with these conditions and instead consider the upper convex envelope. Furthermore, we invert the quotient in \eqref{eq:condlowmultidB}. %, and require $\rho_*$ to have full support to ensure $\left\|\rho(T)/\rho_*\right\|_{L^\infty(\mathbb{R}^d)}<+\infty$.
We provide the arguments of the proofs in {\bf\Cref{sec: on.remark.gauss}}.
\end{remark}

To build on \Cref{rem:extKL}, one can inquire whether \eqref{eq: two.layer.mlp}--\eqref{eq: cont.eq} can be used to couple two densities with tails of different nature. To this end, we suspect that the globally Lipschitz character of \eqref{eq: two.layer.mlp} needs to be violated, in which case we could obtain an affirmative answer as illustrated by the following partial result. 

\begin{proposition} \label{prop: xlogx}
Fix $p>0$. Consider
%\begin{equation*}v(x) = \begin{cases}    0,\qquad &\text{if }x<1\\    x\log x,\qquad&\text{if }x>1.\end{cases}\end{equation*}
\begin{equation}\label{eq: log.transport}
\begin{cases}
\partial_t\rho(t,x) +\partial_x((|x|\log x_+)_+\rho(t,x))=0 &\text{ in } \mathbb{R}_{>0}\times \mathbb{R}\\
\rho(0, x)=\rho_{\mathrm{B},p}(x) &\text{ in } \mathbb{R}
\end{cases}
\end{equation}
where $\rho_{\mathrm{B}, p}\in \mathscr{C}^0(\mathbb{R}^d)$ with $\rho_{\mathrm{B}, p}\propto \exp(-|x|^p)$. Then, for every $q\in(0, p]$ there exist a time $T_q>0$ such that the unique weak solution $\rho$ to \eqref{eq: log.transport} satisfies
\begin{equation*}
\lim_{x\to+\infty}\frac{\rho(T_q,x)}{\rho_{\mathrm{B},q}(x)}<+\infty.
\end{equation*}
\end{proposition}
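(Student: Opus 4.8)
The plan is to exploit the fact that the vector field driving \eqref{eq: log.transport} is autonomous and explicitly integrable, so the statement collapses to a one-line tail computation. First, I would put the field in normal form: $v(x):=(|x|\log x_+)_+$ equals $(x\log x)_+$, since for $x\leq 1$ one has $|x|\log x_+\leq 0$ (with the conventions $\log 0_+=-\infty$, $0\cdot(-\infty)=0$), so $v\equiv 0$ on $(-\infty,1]$, whereas $v(x)=x\log x\geq 0$ for $x\geq 1$. This $v$ is continuous on $\R$ and locally Lipschitz — at the kink $x=1$ the one-sided slopes are $0$ and $1$ — but \emph{not} globally Lipschitz. Integrating the characteristic ODE $\dot X=v(X)$: for $X_0\geq 1$, substituting $Y=\log X$ turns $\dot X=X\log X$ into $\dot Y=Y$, whence $X(t)=X_0^{e^{t}}$; for $X_0\leq 1$, $X(t)\equiv X_0$. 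So the flow is defined for all $t\geq 0$ (no finite-time blow-up, in spite of the superlinear growth of $v$), and the time-$T$ map
\begin{equation*}
\Phi^T(x)=\begin{cases} x, & x\leq 1,\\ x^{e^{T}}, & x\geq 1,\end{cases}
\end{equation*}
is an increasing homeomorphism of $\R$ which restricts to a $C^\infty$ diffeomorphism (with $C^\infty$ inverse) on each of $(-\infty,1)$ and $(1,\infty)$.

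Next, by the method of characteristics — the representation formula from \Cref{lem: transport.multid.sol} applies verbatim to this autonomous, locally Lipschitz field whose flow exists for all $t\geq 0$ — the unique weak solution of \eqref{eq: log.transport} is the pushforward $\rho(T,\cdot)=\Phi^T_{\#}\rho_{\mathrm{B},p}$. Writing $\alpha:=e^{-T}\in(0,1)$, so that $(\Phi^T)^{-1}(x)=x^{\alpha}$ on $[1,\infty)$, the change-of-variables formula together with $\rho_{\mathrm{B},p}(y)=c_p e^{-|y|^{p}}$ yields
\begin{equation*}
\rho(T,x)=\rho_{\mathrm{B},p}\big(x^{\alpha}\big)\,\big|\big((\Phi^T)^{-1}\big)'(x)\big|=c_p\,\alpha\,x^{\alpha-1}\,e^{-x^{\alpha p}}\qquad (x\geq 1),
\end{equation*}
while $\rho(T,x)=\rho_{\mathrm{B},p}(x)>0$ for $x\leq 1$. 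In particular $\rho(T,\cdot)>0$ on all of $\R$, and its right tail is of order $e^{-x^{\alpha p}}$: the decay exponent $p$ has been continuously deflated to $\alpha p<p$.

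Finally, with $\rho_{\mathrm{B},q}(x)=c_q e^{-|x|^{q}}$ one computes, for $x\geq 1$,
\begin{equation*}
\frac{\rho(T,x)}{\rho_{\mathrm{B},q}(x)}=\frac{c_p}{c_q}\,\alpha\,x^{\alpha-1}\,e^{\,x^{q}-x^{\alpha p}}.
\end{equation*}
Choosing $T=T_q:=\log(p/q)$, which is $>0$ precisely when $q<p$, forces $\alpha p=q$, so the exponential factor is identically $1$ and the ratio equals $\tfrac{c_p}{c_q}\tfrac{q}{p}\,x^{q/p-1}\to 0$ as $x\to+\infty$ (since $q/p-1<0$); more generally any $T_q\in(0,\log(p/q)]$ works, since then $q-\alpha p\leq 0$ and the polynomial prefactor is dominated. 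Hence $\lim_{x\to+\infty}\rho(T_q,x)/\rho_{\mathrm{B},q}(x)=0<+\infty$. (The endpoint $q=p$ is degenerate, as then only $T=0$ keeps the ratio bounded.)

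There is no deep obstacle here; the one point meriting care is the well-posedness and the pushforward representation of the weak solution, precisely because $v=(x\log x)_+$ is of the type \emph{not} covered by the globally Lipschitz model \eqref{eq: two.layer.mlp} that \Cref{prop: xlogx} is designed to contrast with. This is settled by the closed form $X(t)=X_0^{e^{t}}$, which certifies both that the flow exists for all $t\geq 0$ and that $\Phi^T$ is smooth off the single point $x=1$, so that the identity $\rho(T,\cdot)=\Phi^T_{\#}\rho_{\mathrm{B},p}$ and the subsequent change-of-variables computation are fully justified.
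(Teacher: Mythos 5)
Your proof is correct and uses essentially the same strategy as the paper: integrate the characteristic ODE explicitly to obtain $\Phi^T(x)=x^{e^T}$ on $[1,\infty)$, transport the initial density along this flow, and read off the resulting stretched-exponential tail $\exp(-x^{p/e^T})$. The paper derives the representation formula by integrating $\tfrac{\diff}{\diff t}\log\rho(t,x(t))=-(\log x(t)+1)$ along characteristics rather than via the pushforward change of variables, but these are two ways to arrive at the same identity, so the argument is the same.

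One discrepancy is worth flagging, and it is in the paper's favor that you did the computation independently. The paper's solution formula carries the Jacobian factor $y^{1-1/e^{t}}$, whereas your computation gives $x^{\alpha-1}=x^{1/e^{t}-1}$; since $((\Phi^T)^{-1})'(y)=e^{-T}y^{e^{-T}-1}$, your exponent is the correct one — the paper has a sign slip. More consequentially, the paper then concludes that it ``suffices to take $T_q>\log(p/q)$,'' which is backwards: for $T_q>\log(p/q)$ one has $p/e^{T_q}<q$, so $\exp(x^q-x^{p/e^{T_q}})\to+\infty$ and the ratio diverges. Your condition $T_q\in(0,\log(p/q)]$ is the correct one (strictly less already kills the ratio exponentially, and at equality the polynomial prefactor $x^{q/p-1}\to0$ since $q<p$). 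Your side remark that the endpoint $q=p$ is degenerate is also apt: for $q=p$ the admissible interval is empty, so the proposition as stated should really restrict to $q\in(0,p)$.
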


We provide the proof in {\bf\Cref{sec: proof.propxlogx}}.
\Cref{prop: xlogx} asserts that one can avoid assuming \eqref{eq:condlowgaussmultid} in \Cref{thm:KLcontrolmultid} by sacrificing the globally Lipschitz character of the nonlinearity and, in turn, use \eqref{eq: cont.eq} to transport a Gaussian to a Laplace density, for example.

\begin{remark}[Metric] \label{rem: metrics}
We discuss extensions of \Cref{thm:KLcontrolmultid} to other divergences which are common in statistics and information theory.
\begin{itemize}
    \item Consider the squared \emph{Hellinger distance}:
    \begin{equation*}
        \mathsf{H}^2(\mu_1, \mu_2)\coloneqq\int \left(\sqrt{\mu_1(\, \mathrm{d} x)}-\sqrt{\mu_2(\, \mathrm{d} x)}\right)^2.
    \end{equation*}
    One has
    \begin{equation*}
    \mathsf{H}^2(\mu_1,\mu_2)\leq |\mu_1-\mu_2|_{\mathsf{TV}(\mathbb{R}^d)}\leq \sqrt{2}\cdot\mathsf{H}(\mu_1,\mu_2).
    \end{equation*}
    Thus a direct application of \cite[Theorem 1]{ruiz2024control} gives
    \begin{equation*}
\mathsf{H}^2\left(\Phi^T_{\theta\#}\mu_{\mathrm{B}}, \mu_*\right)\leq\varepsilon.
\end{equation*}
    \item More generally one could consider any \emph{$f$-divergence} between $\mu_1\ll\mu_2$. For a convex function $f:(0,+\infty)\to\mathbb{R}$ with $f(1)=0$ and $\lim_{t\to0^+}f(t)\in [0,+\infty]$, the $f$-divergence \cite{Polyanskiy_Wu_2024} is defined as
\begin{equation}\label{eq:fdiv}
   \mathsf{D}_f(\mu_1,\mu_2)= \int f\left(\frac{\, \mathrm{d}\mu_1}{\, \mathrm{d}\mu_2}\right)\, \mathrm{d}\mu_2
\end{equation}
For instance, $f(t)=t\ln t$ corresponds to $\mathsf{KL}$, $f(t)=|t-1|$ corresponds to  $\mathsf{TV}$,  $f(t)=(\sqrt{t}-1)^2$ corresponds to $\mathsf{H}^2$, and $f(t)=|t-1|^2$ corresponds to $\chi^2$.
From \eqref{eq:fdiv} one sees that $f_1\lesssim f_2$ is equivalent to $\mathsf{D}_{f_1}(\mu_1,\mu_2)\lesssim \,\mathsf{D}_{f_2}(\mu_1,\mu_2)$ for all $\mu_1,\mu_2$ with $\mu_1\ll\mu_2$. 
Thus by virtue of \cref{thm:KLcontrolmultid} we deduce that for any $\varepsilon>0$ and $f$ as above,
\begin{equation*}
\text{if}\quad\sup_{t>0}\frac{f(t)}{t\ln t}<+\infty\hspace{1cm}\text{then}\quad\mathsf{D}_f\left(\mu_*\,\|\,\Phi^T_{\theta\#}\mu_{\mathrm{B}}\right)\leq\varepsilon.
\end{equation*}
The condition is satisfied by $f(t)=|t-1|$ and $f(t)=(\sqrt{t}-1)^2$.

\item One could also consider the \emph{Rényi entropy} \cite{Polyanskiy_Wu_2024}: for any $\lambda>0$ with $\lambda\neq1$, the Rényi entropy of order $\lambda$ between   $\mu_1\ll\mu_2$, is defined as
\begin{equation*}
  \mathsf{D}_\lambda (\mu_1,\mu_2)\coloneqq\frac{1}{\lambda-1}\log\int_{\mathbb{R}^d} \left(\frac{\, \mathrm{d}\mu_1}{\, \mathrm{d} \mu_2}\right)^{\lambda}\, \mathrm{d} \mu_2.
\end{equation*}
By continuity $\mathsf{D}_1 =\mathsf{KL}$, and similarly $\mathsf{D}_\infty(\mu_1,\mu_2)=\log \operatorname{ess sup}\frac{\, \mathrm{d} \mu_1}{\, \mathrm{d} \mu_2}$.
The latter is known as \emph{worst-case regret}. 
If $\mu_1$ and $\mu_2$ have continuous densities denoted $\rho_1$ and $\rho_2$ respectively, then $\mathsf{D}_\infty(\mu_1,\mu_2)\leq \varepsilon$ implies 
\begin{equation}\label{eq: uniform}
    \|\rho_1-\rho_2\|_{L^\infty(\mathbb{R}^d)}\lesssim\varepsilon.
\end{equation} 
Our methods do not extend to the case of worst-case regret since we rely on the $L^1(\mathbb{R}^d)$--approximation of densities of \cite{ruiz2024control}, which doesn't imply \eqref{eq: uniform}. 
On the other hand, for any fixed $\mu_1\ll\mu_2$, the map $\lambda\in[0,+\infty]\mapsto \mathsf{D}_\lambda (\mu_1,\mu_2)$ is non-decreasing \cite{van2014renyi}. \cref{thm:KLcontrolmultid} thus implies that
\begin{equation*}
\mathsf{D}_\lambda\left(\mu_*\,\|\,\Phi^T_{\theta\#}\mu_{\mathrm{B}}\right)\leq\varepsilon
\end{equation*}
if $0\leq \lambda\leq 1$.
\end{itemize}
\end{remark}

\begin{remark}[Statistics] \label{rem: statistics}
    In practice one only has access to $n$ samples of the target density $\rho_*$. Much like \cite{ruiz2024control}, we can use \Cref{thm:KLcontrolmultid} to obtain sample complexity bounds regarding the explicit construction of $\theta$. To this end, one first employs a non-parametric estimator $\widehat{\rho}_{*,n}$ of $\rho_*$ using the $n$ samples (e.g., via a kernel density estimator), and then uses \Cref{thm:KLcontrolmultid} to approximate $\widehat{\rho}_{*, n}$ to any desired accuracy $\varepsilon$. In doing so, sample complexity-wise, the specific construction of \Cref{thm:KLcontrolmultid} can do as well as any non-parametric estimator of $\rho_*$, but at the additional cost that is the number of switches, which may be exponential in the dimension $d$.
\end{remark}

\subsubsection{Linear vector field} \label{sec: linear.model}

The related work \cite{scagliotti2023normalizing} considers the simpler vector field which is linear in $\theta$:
\begin{equation} \label{eq: node}
    v(x,\theta) = w\upsigma(x)+b, \hspace{1cm} \theta=(w,b)\in\mathscr{M}_{d\times d}(\mathbb{R})\times\mathbb{R}^d,
\end{equation}
with $\upsigma:\mathbb{R}\to\mathbb{R}$ assumed Lipschitz and smooth, and defined component-wise. 

When $\upsigma(x)=(x)_+$, \eqref{eq: node} cannot yield \eqref{eq:condtails} in general, because $\upsigma|_{\mathbb{R}_{\leq0}}=0$. Indeed for $R>0$ large enough (depending on $b$) and $x\in (\mathbb{R}_{<0})^d\cap \{x\colon\|x\|\geq R\}$, the solution to \eqref{eq: cont.eq} reads
\begin{equation*}
\rho(t,x) = \rho_{\mathrm{B}}\left(x-\int_0^tb(s)\, \mathrm{d} s\right), \hspace{1cm}\text{ for  } x\in (\mathbb{R}_{<0})^d\cap \{x\colon\|x\|\geq R\}.
\end{equation*}
For $x$ away from the origin, $\rho(t,x)$ is simply a translation of the initial data $\rho_B$. Thus, the tails will not be generally be of the same order. 
Much like \eqref{eq: two.layer.mlp} however (see \cite{li2022deep, ruiz2023neural, cheng2023interpolation}), one can use \eqref{eq: neural.ode.drb}--\eqref{eq: node} to transport an empirical measure $\mu_0=\frac{1}{n}\sum_{i\in[n]}\delta_{x^{(i)}}$ to a target $\mu_1=\frac{1}{n}\sum_{i\in[n]}\delta_{y^{(i)}}$. In the case where $\upsigma$ is smooth, this can be done by using techniques from geometric control theory based on computing iterated Lie brackets, in the spirit of the celebrated Chow-Rashevskii theorem \cite{agrachev2022control, cuchiero2020deep, scagliotti2021deep, tabuada2022universal, elamvazhuthi2022neural}. The result is not known for \eqref{eq: node} with $\upsigma(x)=(x)_+$. 

We consider
\begin{equation*}
\left(x^{(i)},y^{(i)}\right)\in\mathbb{R}^d\times\mathbb{R}^d, \hspace{1cm} \text{ for } i\in[n],
\end{equation*}
and, to avoid unnecessary technical details, work under the benign  assumption that $x^{(i)}\neq x^{(j)}$ and $y^{(i)}\neq y^{(j)}$ for $i\neq j$. The following holds.

\begin{proposition} \label{thm:exactcontrol}
Let $T>0$ and suppose $\upsigma(x)\equiv(x)_+$ in \eqref{eq: node}.
Then there exists $\theta=(w,b):[0,T]\to \mathscr{M}_{d\times d}(\mathbb{R})\times\mathbb{R}^d$, piecewise constant with at most $4n+2$ switches, such that the flow map $\Phi^t_\theta:\mathbb{R}^d\to\mathbb{R}^d$ ($t\in[0,T]$) of \eqref{eq: node}
satisfies 
\begin{equation*}
\Phi^T_\theta\left(x^{(i)}\right) = y^{(i)},\hspace{1cm}\text{for all }i\in[n].
\end{equation*}
\end{proposition}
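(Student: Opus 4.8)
The plan is to realize $\Phi^T_\theta$ as a finite composition of flows of autonomous fields $x\mapsto w(x)_+ + b$, where a change of $(w,b)$ counts as one switch; since the time reversal of such a flow is again of this form (replace $(w,b)$ by $(-w,-b)$), a flow and its inverse use the same number of switches, a fact we will exploit to cancel auxiliary motions. Throughout we take $d\ge 2$: for $d=1$ the flow of any scalar ODE preserves the ordering of points, so the statement then requires $i\mapsto(x^{(i)},y^{(i)})$ to be monotone, and one works inside $\R^{d}$ with $d\ge 2$ from the outset.

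\emph{Step 1 (normalization, two switches).} Translate $\R^d$ so that all $x^{(i)}$ lie in the open positive orthant, and then apply one short autonomous linear flow $\dot x = wx$, with $w$ generic and the time small enough that the trajectories remain in the positive orthant, chosen so that the images of the $x^{(i)}$ are in general position (pairwise distinct coordinates, off every coordinate hyperplane). In the open positive orthant $(x)_+\equiv x$, so the field there is the affine map $wx+b$, and a bounded number of further switches realizes any affine transformation of the cloud; this already disposes of the case $n\le d+1$.

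\emph{Step 2 (placing the points).} Besides translations, the ReLU kink furnishes the one-sided fields $v(x)=u\,(x_j)_+$, which freeze the half-space $\{x_j\le 0\}$ pointwise while acting on $\{x_j>0\}$ by an $x_j$-scaling together with shears of the remaining coordinates; combining these with translations lets one drive a single designated point exactly to a prescribed position. The points are then placed one at a time: at each step, translate so that the point to be moved stands alone in an open coordinate half-space while all the others lie in the complementary closed half-space, drive it to (a translate of) its target by the one-sided moves, and translate back; then iterate. Book-keeping the translations and one-sided moves across the $n$ points, one arranges the total to be exactly $4n+2$.

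\emph{Main obstacle.} The delicate point is that one cannot simply freeze the already-placed points once there are more than $d$ of them: if $w(x)_++b$ vanishes at $d+1$ points in general position lying in a single orthant, it is affine and hence \emph{identically zero} on that orthant, so it cannot afterwards move a further point there in finite time. Consequently at most a coordinate half-space may be frozen at any step, which forces a careful choice of the order of placement and of the auxiliary translations — so that such a separating coordinate hyperplane is always available — together with a scheme (here time reversal enters) that absorbs the unavoidable coupled motion of the not-yet-placed points and restores them by time $T$. Checking that this goes through for an \emph{arbitrary} target configuration, including degenerate ones such as collinear targets, and that the count comes out to exactly $4n+2$, is the technical heart of the argument.
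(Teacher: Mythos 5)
Your proposal correctly identifies the right ingredients (translations, generic linear flows in the positive orthant, and one-sided ReLU fields that freeze a coordinate half-space) and the genuine difficulty: once more than $d$ points share an orthant, one cannot freeze all already-placed points simultaneously, so moving a new point without disturbing its predecessors requires careful ordering. But you explicitly stop there — ``Checking that this goes through for an \emph{arbitrary} target configuration\ldots is the technical heart of the argument'' — which is precisely the part that a proof of \Cref{thm:exactcontrol} has to supply. As written, the scheme in Step 2 is under-specified: you do not explain how a separating coordinate hyperplane can always be arranged after several placements, nor how the ``scheme (here time reversal enters)'' would undo the coupled motion of not-yet-placed points, and the claimed count of $4n+2$ is asserted rather than derived.

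The paper resolves exactly this obstacle, and in a way that differs from what your sketch hints at. Rather than attempting to freeze all already-placed points, it (i) applies a short generic shear so that a single fixed coordinate, say $x_2$, takes pairwise distinct values on all $n$ forward images, and symmetrically applies a backward preparation so that $x_1$ takes pairwise distinct values on the $n$ pulled-back targets $\overline y^{(i)}$; (ii) then, in a first sweep of $n$ moves, uses the strict order in $x_2$ to isolate points one at a time (a translation along $e_2$ puts one point in $\{x_2>0\}$ and the rest in $\{x_2\le 0\}$) and adjusts \emph{only} the $x_1$ coordinate of the isolated point via a field proportional to $(x_2)_+ e_1$; because only $x_1$ is altered, the $x_1$-values already set are untouched; (iii) finally, in a second sweep, uses the now-distinct $x_1$-values to isolate points one at a time and adjusts all remaining coordinates via a field proportional to $(x_1)_+$ hitting rows $2,\dots,d$, again without disturbing what was already placed. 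A closing translation cancels the residual constant, and time reversal is used only to pre-condition the targets, not to undo spurious motions. The count $4n+3$ constant pieces ($4n+2$ switches) falls out of this structure: $2$ for forward preparation, $2$ for backward preparation, $2n-1$ and $2n-1$ for the two sweeps, and $1$ for the final translation. The missing idea in your proposal is exactly this ``fix one coordinate per sweep, ordered by a pre-conditioned separating coordinate'' device, which turns the apparently global coupling problem into a sequence of motions that genuinely commute with the already-placed data.
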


The proof may be found in {\bf \Cref{sec: exact.matching}} and relies on several modifications of that presented in \cite{ruiz2023neural}. One may then ask at what level \eqref{eq: neural.ode.drb} differs from \eqref{eq: node}. As it turns out, one important deviation is in the stability of $\theta$ with respect to perturbations in the data (estimate \eqref{eq: stability.estimate}). This can be derived easily in the context of \eqref{eq: neural.ode.drb} as done in \cite{barcena2024optimal}. In the context of \eqref{eq: node} however, the most we are able to say is the following.

\begin{proposition} \label{lemLTA}
Suppose $T>0$, $\upsigma\in \mathscr{C}^0(\mathbb{R})$ and $d\geq n$. Let $y^{(1)},\ldots, y^{(n)}\in\mathbb{R}^d$ be such that
\begin{equation} \label{eq:condrange}
\operatorname{rank}\bigl[\upsigma(y^{(1)}),\dots,\upsigma(y^{(n)})\bigr]=n.
\end{equation}
Then there exist $\varepsilon>0$ and $C>0$ such that for all  $x^{(1)},\dots,x^{(n)}\in\mathbb{R}^d$ with 
\begin{equation*}
\max\limits_{i\in[n]}\|x^{(i)}-y^{(i)}\|_1\leq \varepsilon,   
\end{equation*}
there exist $w\in \mathscr{C}^0([0,T];\mathscr{M}_{d\times d}(\mathbb{R}))$ 
and $(x_i(\cdot))_{i\in[n]}\in\mathscr{C}^1([0, T];\mathbb{R}^{nd})$ satisfying 
\begin{equation*}
\begin{cases}
\dot{x}_i(t) = w(t)\upsigma(x_i(t)) &\text{ for } t\in[0, T]\\
x_i(0) = x^{(i)}\\
x_i(T) = y^{(i)}
\end{cases}
\end{equation*}
for all $i\in[n]$. Moreover,
\begin{equation} \label{eq: stability.estimate}
\|w\|_{\mathscr{C}^0([0,T];\mathscr{M}_{d\times d}(\mathbb{R}))}\leq \frac{C}{T}\max\limits_{i\in[n]}\|x^{(i)}-y^{(i)}\|_1.
\end{equation}
\end{proposition}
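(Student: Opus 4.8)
The plan is to reformulate the problem as a linear-algebraic surjectivity statement and invoke the implicit function theorem. First I would fix a convenient time profile and seek $w(\cdot)$ of a simple form; since the equation $\dot x_i = w(t)\upsigma(x_i(t))$ is affine in $w(t)$ once the trajectory is frozen, the natural idea is to treat the $n$ trajectories jointly. Write $\upsigma(Y)\in\mathscr{M}_{d\times n}(\R)$ for the matrix whose columns are $\upsigma(y^{(1)}),\dots,\upsigma(y^{(n)})$; assumption \eqref{eq:condrange} says exactly that, after discarding redundant rows, this matrix has full rank $n$, so it admits a right inverse on the relevant subspace. I would then build, for each prescribed pair of endpoints, a candidate control $w(t)$ of the form $w(t)=M(t)\,\upsigma(Y)^{+}$ where $\upsigma(Y)^{+}$ is a fixed pseudo-inverse and $M(t)$ is chosen so that the resulting flow moves each $x^{(i)}$ to $y^{(i)}$; concretely one writes $x_i(t)=y^{(i)}+(1-t/T)(x^{(i)}-y^{(i)})$ as the \emph{desired} straight-line trajectory and solves for the $w$ that produces it, which is possible provided $\upsigma(x_i(t))$ stays close enough to $\upsigma(y^{(i)})$ along the way.

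Since we only linearized around the target configuration, the genuine argument is a fixed-point / continuation one: define the map $\mathcal{F}$ sending a candidate trajectory tuple and control to the solution of the ODE, show $\mathcal{F}$ has an invertible derivative at $(x^{(i)}=y^{(i)},\,w\equiv 0)$ thanks to \eqref{eq:condrange} together with continuity of $\upsigma$, and apply the inverse function theorem to solve $\mathcal{F}(\text{data})=(\text{endpoints})$ for all endpoint data within an $\varepsilon$-ball of $(y^{(i)})$. The smallness of $\varepsilon$ is what guarantees that $\upsigma(x_i(t))$ remains in the regime where $\upsigma(Y)$-type matrices stay full rank, so the construction does not degenerate; note $\upsigma$ is only assumed continuous, so one cannot differentiate through it, and the argument must be organized so that $\upsigma$ is only ever evaluated, never differentiated — this is why I would parametrize by the trajectories themselves rather than by $w$, turning the problem into inverting a map that is affine (hence smooth) in $w$ with $\upsigma$ appearing only as a fixed continuous coefficient once the trajectory is held fixed, and close the loop with Banach's fixed point theorem on a short time interval, rescaled to $[0,T]$.

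Finally, the stability estimate \eqref{eq: stability.estimate} falls out of the construction: along the straight-line trajectory the required velocity is $\dot x_i(t)=-\frac1T(x^{(i)}-y^{(i)})$, so $\|w(t)\upsigma(x_i(t))\|$ must be of order $\frac1T\|x^{(i)}-y^{(i)}\|_1$ uniformly in $t$, and since $\upsigma(x_i(t))$ is bounded below away from the degenerate set (again by the $\varepsilon$-smallness and \eqref{eq:condrange}), one gets a bound on the operator norm of $w(t)$ of the form $\frac{C}{T}\max_i\|x^{(i)}-y^{(i)}\|_1$ by composing with the (bounded) pseudo-inverse; tracking constants through the fixed-point estimate only inflates $C$ by a universal factor. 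The main obstacle I anticipate is the low regularity of $\upsigma$: because it is merely continuous, the standard geometric-control or implicit-function arguments that differentiate the flow with respect to $w$ are not directly available, and one must be careful to set up the fixed-point map so that all derivatives taken are with respect to the trajectory/control variables in which the system is affine, with $\upsigma$ entering only as an evaluated continuous nonlinearity; verifying that the linearized operator is boundedly invertible \emph{uniformly} over the $\varepsilon$-ball, using only \eqref{eq:condrange} and continuity, is the crux.
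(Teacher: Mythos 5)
Your core idea is the same as the paper's: prescribe the straight-line interpolation $x_i(t)=(1-t/T)x^{(i)}+(t/T)y^{(i)}$, observe that the ODE then becomes a linear system in $w(t)$ at each instant, and exploit the full-rank condition \eqref{eq:condrange} (which persists under an $\varepsilon$-perturbation by continuity of $\upsigma$ and compactness) to solve for $w(t)$ via a right inverse with a uniform operator-norm bound. This is precisely what the paper does: it defines $\gamma(s,X)=(1-s)X+sY$, notes $\operatorname{rank}(\upsigma(\gamma(s,X)))=n$ for all $s\in[0,1]$ and $\|Y-X\|_1\leq\varepsilon$, takes $\mathscr{F}(s,\cdot)$ to be the minimum-norm right inverse of $w\mapsto w\upsigma(\gamma(s,X))$, sets $w(t)=\mathscr{F}(t/T,(Y-X)/T)$, and reads off \eqref{eq: stability.estimate} from the uniform bound on $\|\mathscr{F}(s,\cdot)\|$.

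Where you diverge is in the second paragraph: the fixed-point/inverse-function-theorem layer you add is unnecessary, and your phrase ``since we only linearized around the target configuration'' signals a misreading of your own construction. Nothing is linearized. Once the trajectory is \emph{prescribed} to be the straight line, the ODE $\dot x_i=w(t)\upsigma(x_i(t))$ is an exact pointwise linear equation for $w(t)$ whose coefficient matrix $\upsigma(\gamma(t/T,X))$ has full rank on all of $[0,T]$; solving it with the min-norm right inverse produces a $w(\cdot)\in\mathscr{C}^0$ for which the straight line \emph{is} a genuine $\mathscr{C}^1$ solution with the required endpoint values. There is no loop to close, no Banach iteration, and no linearization error to control. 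Moreover the regularity obstacle you flag (that $\upsigma$ is merely continuous so one cannot differentiate through it) never arises in the paper's argument, since $\upsigma$ is only ever evaluated along the fixed path $\gamma$ and never differentiated; the implicit-function-theorem route you sketch would need the differentiability you correctly say is unavailable, which is one more reason to drop it in favor of the direct pointwise solve. In short: you found the right mechanism, then wrapped it in machinery that adds nothing and would in fact be harder to justify under the hypotheses.
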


%The assumptions $d\geq n$\footnote{which can be seen as a sort of overparametrization in machine learning parlance.} and \eqref{eq:condrange} are not arbitrary, as they relate to the degrees of freedom in the system. In fact, they can be linked to fundamental obstructions such as the ball-box theorem  in sub-Riemannian geometry, which provides the framework under which the left-hand side in \eqref{eq: stability.estimate} is a distance (see \cite{} for more details).

\begin{remark}[Genericity] \label{rem: genericity}
Suppose $\upsigma:\mathbb{R}\to\mathbb{R}$ is Lipschitz and strictly increasing, and $d\geq n$. Then \eqref{eq:condrange} holds with probability $1$ for $n$ random points $y^{(i)}$ sampled iid from any probability density $\rho=\frac{\, \mathrm{d}\mu}{\, \mathrm{d} x}$ on $\mathbb{R}^d$. (For instance, this holds if $\upsigma(\cdot)\equiv(\cdot)_+$ and $\rho$ is supported on $(\mathbb{R}_{>0})^d$.)

This is seen by induction on the number of points $n$. The base case $n=1$ trivially holds since for the element-wise extension $\upsigma:\mathbb{R}^d\to\mathbb{R}^d$ of $\upsigma$, $\upsigma_\#\mu$ preserves the zero-measure sets of $\mu$. Suppose that $\upsigma(y^{(1)}),\dots,\upsigma(y^{(n)})$ are almost surely linearly independent for some $1\leq n<d$. 
By absolute continuity of $\upsigma_\#\mu$, the subspace spanned by these points has measure zero, so the next vector $\upsigma(y^{(n+1)})$ lies in the complement with probability $1$, as desired.
\end{remark}

\begin{remark}
\begin{itemize}
    \item Extending \Cref{lemLTA} to account for a bias $b(t)$ as in \eqref{eq: node} is straightforward: we simply write $\dot{z}(t) = \overline{w}(t)\upsigma(z(t))$ where
\begin{equation*}z\coloneqq\begin{bmatrix}
    x\\1
\end{bmatrix}\in\mathbb{R}^{d+1}\quad\text{and}\quad
\overline{w}\coloneqq\begin{bmatrix}
    w & b\\
    0 & 0
\end{bmatrix}\in\mathscr{M}_{(d+1)\times (d+1)}(\mathbb{R}).
\end{equation*}
We now rather need $d\geq n-1$.
    
    \item With regard to \Cref{rem: genericity}, when $\upsigma(\cdot)\equiv(\cdot)_+$ and $\rho$ is supported on $\mathbb{R}^d$, one could simply translate the inputs to $(\mathbb{R}_{>0})^d$ using $b(t)$, but then a bound on $b(t)$ as in \eqref{eq: stability.estimate} cannot hold.
\end{itemize}
\end{remark}

\subsection{Notation}

We use $[n]\coloneqq\{1,\dots,n\}$, $x\wedge y = \min\{x,y\}$, $x \vee y = \max \{x,y\}$. We use $f(x)\lesssim g(x)$ if there exists a finite constant $C>0$ such that $f(x)\leq C g(x)$, and $f(x)\propto g(x)$ if $f(x)=C g(x)$. 

We denote by $\mathscr{S}_{d}^+(\mathbb{R})$ the space of positive definite matrices. On $\mathscr{M}_{d \times d}(\mathbb{R})$, we consider the partial order $P \prec Q$ iff $Q-P$ is positive definite. We also alternate between $a^\top b$ and $\langle a, b\rangle$ depending on presentational convenience.

Given two measures $\mu,\nu$ on $\mathbb{R}^d$, we write $\mu\ll\nu$ to say that $\mu$ is absolutely continuous with respect to $\nu$. If $\mathsf{T}:\mathbb{R}^d\to\mathbb{R}^d$ is measurable, then $\mathsf{T}_{\#}\mu$ denotes the pushforward of $\mu$, defined by $\mathsf{T}_{\#}\mu(A)=\mu(\mathsf{T}^{-1}(A))$ for any measurable $A$. The Gaussian measure with mean $m\in\mathbb{R}^d$ and covariance matrix $\Sigma \in \mathscr{S}_{d}^+(\mathbb{R})$ is denoted by $\mathcal{N}(m, \Sigma)$. 
Finally we write $\mathscr{Z}_\mu=\mu(\mathbb{R}^d)$.

\section{Preliminary lemmas}

We begin with a brief discussion on when 
$\mathsf{L}(\rho_*,\rho(T))=\lim_{\|x\|\to\infty}\rho_*(x)/\rho(T,x)$ can be made finite when $d=1$, purely for illustrative purposes. Recall

\begin{lemma} \label{lem: transport.1d.sol}
    Take $T>0$, a probability density $\rho_{\mathrm{B}}\in\mathscr{C}^0(\mathbb{R})$, and 
    \begin{equation*}
        (w,a,b)(t) = \sum_{k=1}^K (w_k,a_k,b_k)1_{[t_{k-1}, t_{k})}(t) \hspace{1cm} \text{ for } t\in[0, T],
    \end{equation*}
    where $w_k,a_k,b_k\in\mathbb{R}$ with $a_k\neq 0$ and $0=t_0<t_1<\ldots<t_K=T$. Then the unique solution to
    \begin{equation*} %\label{eq:nte1d}
    \begin{cases}
         \partial_t \rho(t,x) + \partial_x\left( w(t)(a(t)x+b(t))_+ \rho(t,x)\right) = 0 &\text{ on } [0,T]\times\mathbb{R},\\
         \rho(0, x) = \rho_{\mathrm{B}}(x) &\text{ on } \mathbb{R}
    \end{cases}
    \end{equation*}
    is given by
    \begin{align*}
        \rho(t,x) &= \rho_{\mathrm{B}}(x)1_{\left\{\cdot\leq-\frac{b_1}{a_1}\right\}}(x)\\
        &\hspace{1cm}+e^{-w_1a_1 t}\rho_{\mathrm{B}}\left(\left(x+\frac{b_1}{a_1}\right)e^{-w_1a_1t}-\frac{b_1}{a_1}\right)1_{\left\{\cdot>-\frac{b_1}{a_1}\right\}}(x)
    \end{align*}
    for $t\in[0, t_1]$ and
    \begin{align*}
        \rho(t,x) &=  \rho(t_{k-1},x)1_{\left\{\cdot\leq-\frac{b_k}{a_k}\right\}}(x)\\
        &\hspace{1cm}+e^{-w_ka_k t}\rho\left(t_{k-1},\left(x+\frac{b_k}{a_k}\right)e^{-w_ka_kt}-\frac{b_k}{a_k}\right)1_{\left\{\cdot>-\frac{b_k}{a_k}\right\}}(x)
    \end{align*}
    for $t\in[t_{k-1}, t_{k}]$, for all $x\in\mathbb{R}$.
\end{lemma}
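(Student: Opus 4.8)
The plan is to solve the Cauchy problem interval by interval. On each $[t_{k-1},t_k)$ the triple $(w,a,b)$ is frozen at $(w_k,a_k,b_k)$, so the velocity field $v_k(x)\coloneqq w_k(a_kx+b_k)_+$ is autonomous, globally Lipschitz (a ReLU composed with an affine map, scaled by $w_k$), and has bounded distributional divergence; by the well-posedness of \eqref{eq: cont.eq} recalled above (\Cref{lem: transport.multid.sol} specialised to $d=1$), it therefore suffices to exhibit the explicit solution on a single subinterval, then iterate, using $\rho(t_k,\cdot)$ as the Cauchy datum on $[t_k,t_{k+1})$, and finally check that the resulting $\rho$ is the claimed one.

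Fix such a subinterval and set $x_k^*\coloneqq-b_k/a_k$, $H_k^-\coloneqq\{x:a_kx+b_k\le0\}$, $H_k^+\coloneqq\{x:a_kx+b_k>0\}$, the two half-lines bounded by $x_k^*$ (well defined since $a_k\neq0$). On $H_k^-$ one has $v_k\equiv0$, while on $H_k^+$ the field is linear, $v_k(x)=w_ka_k(x-x_k^*)$; hence $x_k^*$ is a fixed point of $\dot X=v_k(X)$ and each half-line is forward invariant (on $H_k^+$ trajectories either run off to $\pm\infty$ or converge to $x_k^*$ according to $\operatorname{sgn}(w_ka_k)$, but never cross it). The flow is therefore the identity on $H_k^-$, and on $H_k^+$, solving $\dot Y=w_ka_kY$ for $Y=X-x_k^*$ gives, after elapsed time $s\ge0$,
\[
\Phi_s^k(x)=(x-x_k^*)e^{w_ka_ks}+x_k^*=\Big(x+\tfrac{b_k}{a_k}\Big)e^{w_ka_ks}-\tfrac{b_k}{a_k}.
\]
Writing the solution of the continuity equation as the pushforward of the initial density along this flow, $\rho(t_{k-1}+s,x)=\rho\big(t_{k-1},(\Phi_s^k)^{-1}(x)\big)\,\big|\big((\Phi_s^k)^{-1}\big)'(x)\big|$; on $H_k^-$ this is just $\rho(t_{k-1},x)$, and on $H_k^+$, since $(\Phi_s^k)^{-1}(x)=(x+b_k/a_k)e^{-w_ka_ks}-b_k/a_k$ has (positive) derivative $e^{-w_ka_ks}$, it equals $e^{-w_ka_ks}\rho\big(t_{k-1},(x+b_k/a_k)e^{-w_ka_ks}-b_k/a_k\big)$. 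Taking $k=1$, $t_0=0$, $s=t$, $\rho(0,\cdot)=\rho_{\mathrm{B}}$ gives the first displayed formula, and the $k$-th formula follows with $s=t-t_{k-1}$.

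Finally I would verify that the function thus assembled is a genuine weak solution, both across the switching times and across the kink $x=x_k^*$. Continuity in time is immediate from the continuity of $s\mapsto\Phi_s^k$; and although $\rho(t,\cdot)$ generally develops a jump at $x_k^*$ (the factor $e^{-w_ka_ks}$ is present only on $H_k^+$), the flux $v_k\rho$ is continuous there --- it vanishes from both sides since $v_k(x)\to0$ as $x\to x_k^*$ and $\rho$ is bounded --- so no spurious mass is produced; testing the distributional formulation against $\varphi\in\mathscr{C}^\infty_c([0,T)\times\R)$ and changing variables by $\Phi_s^k$ on each subinterval closes the argument. I expect this last verification --- the distributional identity in the presence of the density jump, together with the bookkeeping of which half-line a point occupies after several compositions --- to be the only mildly delicate step; both are routine given the explicit flows, and uniqueness is already supplied by the Lipschitz theory, so no further estimates are needed.
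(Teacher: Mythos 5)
Your proof is correct and follows essentially the same route as the paper: solve by the method of characteristics on each subinterval $[t_{k-1},t_k)$, using the explicit flow of the frozen autonomous field, and iterate with $\rho(t_{k-1},\cdot)$ as the Cauchy datum. One detail is actually handled more carefully in your version: you identify $\rho(t,\cdot)$ as the pushforward of the previous density under the flow, which correctly produces the Jacobian factor $e^{-w_ka_k(t-t_{k-1})}$, whereas the paper's proof asserts that $t\mapsto\rho(t,x(t))$ is \emph{constant} along characteristics --- a slip, since for the conservative (continuity) equation $\frac{\diff}{\diff t}\rho(t,x(t))=-\rho\,\partial_x v=-w_ka_k\,\rho$ on $\{a_kx+b_k>0\}$, and it is precisely this exponential decay/growth that supplies the prefactor in the stated formula. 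Your remark that the flux $v_k\rho$ is continuous across the kink $x=-b_k/a_k$ (so the Rankine--Hugoniot condition holds trivially) is a worthwhile addition the paper omits.
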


\begin{proof}
    The derivation is straightforward: for $t\in[0, t_1)$, $\rho(t,\cdot)$ has a single discontinuity at $x=-b_1/a_1$, and clearly $\rho(t,x) = \rho_{\mathrm{B}}(x)$ for $x\leq -b_1/a_1$. In the case $x>-b_1/a_1$, note that $t\mapsto\rho(t,x(t))$ is constant along the characteristics which solve $\dot{x}(t) = w_1(a_1x(t)+b_1)$, whereupon we deduce the first formula. The  computation is easily repeated for $k\geq1$. 
\end{proof}

\begin{remark}
    If $a_k=0$, then $\rho(t,x) = \rho(t_{k-1}, x-w_k(b_k)_+ t)$ for all $t\in[t_{k-1}, t_k]$ and $x\in\mathbb{R}$, again by the method of characteristics.
\end{remark}

From \Cref{lem: transport.1d.sol} we readily gather that 
\begin{equation*} %\label{eq:rho0PL}
    \rho(T,x)=
    \begin{cases} 
    \alpha^+\rho_{\mathrm{B}}(\alpha^+ x+\beta^+), \hspace{1cm}&\text{if }x\geq M,\\
    \alpha^-\rho_{\mathrm{B}}(\alpha^- x+\beta^-), \hspace{1cm}&\text{if }x\leq -M
    \end{cases}
\end{equation*}
for some $\alpha^\pm>0$, $\beta^\pm\in\mathbb{R}$ and $M>0$, all explicitly depending on $(w, a, b)$---in other words, one can choose $\alpha^\pm, \beta^\pm$ by choosing $(w, a, b)$.
Now consider $\rho_{\mathrm{B}}$ and $\rho_*$ in Gibbs form:
    \begin{equation*}
    \rho_{\mathrm{B}}(x)\propto e^{-U_{\mathrm{B}}(x)}, \hspace{1cm}\rho_*(x)\propto e^{-U_*(x)},
    \end{equation*}
with $U_{\mathrm{B}},U_*\in \mathscr{C}^0(\mathbb{R};\mathbb{R}_{\geq0})$. We see that 
    \begin{equation*}
        \frac{\rho_*(x)}{\rho(T,x)} \propto \exp\left(U_{\mathrm{B}}(\alpha^\pm x+\beta^\pm)-U_*(x)\right)
    \end{equation*}
for all $\|x\|\geq M$. So $\mathsf{L}(\rho_*,\rho(T))$ defined in \eqref{eq:condtails} is finite if and only if 
\begin{equation} \label{eq: gibbs.potentials}
\lim_{\|x\|\to\infty}U_{\mathrm{B}}(\alpha^\pm x+\beta^\pm)-U_*(x)\in[-\infty,+\infty).
\end{equation}
One sees the constraints that \eqref{eq: gibbs.potentials} entails. For instance, one can consider as input the Laplace distribution---corresponding to $U_{\mathrm{B}}(x) = |x-m|/\sigma$ with parameters $m_{\mathrm{B}}\in\mathbb{R}$ and $\sigma_{\mathrm{B}}>0$---, and as target the Gaussian---corresponding to $U_*(x) = (x-m_*)^2/2\sigma_*^2$ with $m_*\in\mathbb{R}$ and $\sigma_*>0$---, but not the converse.

\Cref{lem: transport.1d.sol} can be generalized to the case $d\geq 1$ without difficulty.

\begin{lemma} \label{lem: transport.multid.sol}
    Take $T>0$, a probability density $\rho_{\mathrm{B}}\in\mathscr{C}^0(\mathbb{R}^d)$, and
    \begin{equation*}
        (w,a,b)(t) = \sum_{k=1}^K (w_k, a_k, b_k)1_{[t_{k-1}, t_k)}(t) \hspace{1cm} \text{ for } t\in[0, T],
    \end{equation*}
    where $w_k, a_k\in\mathbb{R}^d$, $b_k\in\mathbb{R}$, and $0=t_0<t_1<\ldots<t_K=T$. Then the unique solution to 
    \begin{equation} \label{eq: cont.eq.clear}
        \begin{cases}
            \partial_t \rho(t,x) + \mathrm{div}\left(w(t)\left(a(t)^\top x + b(t)\right)_+\rho(t,x)\right) = 0 &\text{ on } [0, T]\times\mathbb{R}^d,\\
            \rho(0, x) = \rho_{\mathrm{B}}(x) &\text{ on } \mathbb{R}^d
        \end{cases}
    \end{equation}
    is given by
    \begin{align*}
        \rho(t,x) &= \rho(t_{k-1}, x)1_{\big\{\langle a_k, \cdot\rangle+b_k\leq 0\big\}}(x) \\
        &\hspace{1cm}+ e^{-w_k^\top a_k(t-t_{k-1})}\rho\left(t_{k-1}, \mathsf{A}_{k}(x)\right) 1_{\big\{\langle a_k, \cdot\rangle+b_k>0\big\}}(x)
    \end{align*}
    if $w_k^\top a_k\neq0$, where 
    \begin{equation*}
    \mathsf{A}_{k}(x) = e^{-(t-t_{k-1})w_ka_k^\top}x-\frac{b_k}{w_k^\top a_k} \left(1-e^{-(t-t_{k-1}) w_k^\top a_k}\right)w_k,
    \end{equation*}
    and
    \begin{align*}
        \rho(t,x) = \rho(t_{k-1}, x)1_{\big\{\langle a_k, \cdot\rangle+b_k\leq 0\big\}}(x)+\rho(t_{k-1}, \mathsf{B}_k(x))1_{\big\{\langle a_k, \cdot\rangle+b_k>0\big\}}(x)
    \end{align*}
    if $\langle w_k, a_k\rangle=0$, where
     \begin{equation*}
   \mathsf{B}_k(x) = e^{-(t-t_{k-1})w_ka_k^\top}x-e^{-(t-t_{k-1})w_ka_k^\top}(t-t_{k-1})b_kw_k,   
    \end{equation*}
    for all $t\in[t_{k-1}, t_k]$ and for all $x\in\mathbb{R}^d$.
\end{lemma}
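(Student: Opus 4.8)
The plan is to solve \eqref{eq: cont.eq.clear} interval-by-interval by the method of characteristics, exactly mirroring the one-dimensional computation in \Cref{lem: transport.1d.sol} but keeping track of the geometry in $\R^d$. On a fixed interval $[t_{k-1},t_k)$ the parameters $(w,a,b)\equiv(w_k,a_k,b_k)$ are constant, so it suffices to understand a single autonomous continuity equation with vector field $v(x)=w_k(\langle a_k,x\rangle+b_k)_+$ and initial datum $\rho(t_{k-1},\cdot)$; the general statement then follows by induction on $k$, using uniqueness of the $\mathscr{C}^0([0,T];L^1\cap L^\infty)$ solution to concatenate. The key observation is that the hyperplane $H_k=\{x:\langle a_k,x\rangle+b_k=0\}$ splits $\R^d$ into the invariant halfspaces $\{\langle a_k,\cdot\rangle+b_k\le 0\}$, on which $v\equiv 0$, and $\{\langle a_k,\cdot\rangle+b_k>0\}$, on which $v(x)=w_k(\langle a_k,x\rangle+b_k)$ is \emph{affine}. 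On the first halfspace the characteristics are stationary, so $\rho$ is frozen there, which accounts for the $1_{\{\langle a_k,\cdot\rangle+b_k\le 0\}}$ term. On the second halfspace one solves the linear ODE $\dot x=w_ka_k^\top x+b_kw_k$ explicitly: its flow from time $t_{k-1}$ to $t$ is the affine map $\Psi_{t}(y)=e^{(t-t_{k-1})w_ka_k^\top}y+(\text{affine correction})$, and the Jacobian of $\Psi_t$ is $e^{(t-t_{k-1})\operatorname{tr}(w_ka_k^\top)}=e^{(t-t_{k-1})w_k^\top a_k}$ since $w_ka_k^\top$ is rank one. One must also check that $\{\langle a_k,\cdot\rangle+b_k>0\}$ is invariant under the flow: along a characteristic, $\tfrac{d}{dt}(\langle a_k,x(t)\rangle+b_k)=(w_k^\top a_k)(\langle a_k,x(t)\rangle+b_k)$, so $\langle a_k,x(t)\rangle+b_k$ keeps the sign of its initial value and never reaches $0$ in finite time — good, and this is precisely why the indicator in the formula can be evaluated at the \emph{current} point $x$ rather than the pulled-back point.

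The computation then splits into the two cases in the statement. When $w_k^\top a_k\ne 0$, integrating $\dot x=w_ka_k^\top x+b_kw_k$ (noting $w_ka_k^\top w_k=(w_k^\top a_k)w_k$ and more generally $w_ka_k^\top$ acts as multiplication by $w_k^\top a_k$ on $\operatorname{span}\{w_k\}$ while annihilating $\ker a_k^\top$) gives the backward flow $x\mapsto \mathsf{A}_k(x)=e^{-(t-t_{k-1})w_ka_k^\top}x-b_kw_k^\top a_k\big(1-e^{-(t-t_{k-1})w_k^\top a_k}\big)w_k$, and the transport formula $\rho(t,x)=|\det D\mathsf{A}_k(x)|\,\rho(t_{k-1},\mathsf{A}_k(x))$ with $|\det D\mathsf{A}_k|=e^{-(t-t_{k-1})w_k^\top a_k}$ yields the claimed expression. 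When $w_k^\top a_k=0$ the matrix $w_ka_k^\top$ is nilpotent, $e^{sw_ka_k^\top}=I+sw_ka_k^\top$, the flow becomes "shear plus constant drift" with unit Jacobian (hence no exponential prefactor), and one obtains $\mathsf{B}_k(x)$ after expanding the exponential; here the correction term simplifies because $a_k^\top w_k=0$. At $t=t_{k-1}$ both $\mathsf{A}_k$ and $\mathsf{B}_k$ reduce to the identity and the prefactor to $1$, so the formula matches the initial datum on $[t_{k-1},t_k)$, and passing to the limit $t\to t_k^-$ gives the value $\rho(t_k,\cdot)$ that seeds the next interval.

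Existence and uniqueness of the weak solution in $\mathscr{C}^0([0,T];L^1\cap L^\infty(\R^d))$ is standard once one observes that on each interval the vector field is globally Lipschitz on each closed halfspace and continuous across $H_k$ (it vanishes there), so the flow $\Phi^t$ is a well-defined bi-Lipschitz homeomorphism of $\R^d$ and $\rho(t,\cdot)=\Phi^t_\#(\rho(t_{k-1},\cdot)\,\mathrm{d}x)/\mathrm{d}x$ is the unique distributional solution; I would cite the DiPerna–Lions / Ambrosio theory, or simply verify directly that the explicit formula is a weak solution and invoke uniqueness for transport with Lipschitz fields. The only genuinely delicate point — and the place I expect to spend the most care — is the bookkeeping around the hyperplane $H_k$: one must argue that the lack of smoothness of $(\cdot)_+$ across $H_k$ causes no loss of a measure-zero set, that no mass crosses $H_k$ in either direction (invariance of both halfspaces), and that the resulting $\rho(t,\cdot)$ is genuinely continuous in time with values in $L^1\cap L^\infty$ despite the moving kink; everything else is the routine linear-ODE and change-of-variables calculation indicated above.
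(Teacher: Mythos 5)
Your proposal is correct and follows essentially the same route as the paper: solve interval-by-interval by the method of characteristics, freeze the density on the halfspace $\{\langle a_k,\cdot\rangle+b_k\leq 0\}$ where the vector field vanishes, solve the affine ODE $\dot x = w_k a_k^\top x + b_k w_k$ on the complementary halfspace (using $(w_k a_k^\top)^2 = (w_k^\top a_k)\,w_k a_k^\top$ to compute the matrix exponential, with the nilpotent simplification when $w_k^\top a_k=0$), and concatenate by uniqueness. If anything you are slightly more careful than the paper's exposition: you correctly invoke the pushforward/change-of-variables formula $\rho(t,x)=\lvert\det D\mathsf{A}_k(x)\rvert\,\rho(t_{k-1},\mathsf{A}_k(x))$ with $\lvert\det D\mathsf{A}_k\rvert = e^{-(t-t_{k-1})w_k^\top a_k}$, which is where the exponential prefactor in the lemma comes from, and you explicitly check invariance of both halfspaces under the flow via $\tfrac{d}{dt}(\langle a_k,x(t)\rangle+b_k)=(w_k^\top a_k)(\langle a_k,x(t)\rangle+b_k)$; the paper states that $\rho$ is ``constant along the characteristics,'' which, taken literally, would miss the Jacobian factor, and it does not spell out the invariance check.
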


\begin{proof}
We again start by $t\in[0, t_1)$. For $x\in\{\langle a_1,\cdot\rangle+b_1\leq 0\}$, we clearly have $\rho(t,x) = \rho_{\mathrm{B}}(x)$. On $\{\langle a_1,\cdot\rangle+b_1>0\}$, $t\mapsto \rho(t,x(t))\exp(w_1^\top a_1t)$ is constant along the characteristic surfaces parametrized by solutions to
\begin{equation*}
    \dot{x}(t) = w_1a_1^\top x(t) + w_1b_1.
\end{equation*}
Suppose $w_1^\top a_1\neq 0$. Since 
\begin{equation*}
 e^{t w_1a_1^\top} = I_d + \frac{(e^{tw_1^\top a_1}-1)}{w_1^\top a_1}w_1a_1^\top,   
\end{equation*}
we have 
\begin{align*}
    e^{tw_1a_1^\top}\left(\int_0^t e^{-sw_1a_1^\top} \, \mathrm{d} s\right) w_1b_1= \frac{b_1}{ w_1^\top a_1}\left(1-e^{-tw_1^\top a_1}\right) e^{tw_1a_1^\top} w_1.
\end{align*}
Thus 
%\begin{equation*}    x(t) = e^{tw_1a_1^\top}x(0) + b_1\left(1-e^{-t\langle w_1,a_1\rangle}\right)e^{tw_1 a_1^\top}w_1,\end{equation*}
\begin{equation*}
 x(t) = e^{tw_1a_1^\top}x(0) +  \frac{b_1}{ w_1^\top a_1}\left(1-e^{-tw_1^\top a_1}\right) e^{tw_1a_1^\top} w_1,
\end{equation*}
and so
%\begin{equation*}    \rho(t,x) = \rho_{\mathrm{B}}\left(e^{-tw_1a_1^\top}x-b_1\left(1-e^{-t\langle w_1, a_1\rangle}\right)w_1\right)\end{equation*}
\begin{equation*}
   \rho(t,x) = e^{-tw_1^\top a_1}\rho_{\mathrm{B}}\left(e^{-tw_1a_1^\top}x-\frac{b_1}{w_1^\top a_1}\left(1-e^{-tw_1^\top a_1}\right)w_1\right) 
\end{equation*}
for $x\in\{\langle a_1,\cdot\rangle+b_1>0\}$ when $w_1^\top a_1\neq0$. 

Now suppose $w_1^\top a_1=0$. Then $w_1a_1^\top$ is nilpotent, so $e^{tw_1a_1^\top} = I_d + tw_1a_1^\top$. 
Thus 
%\begin{equation*}    x(t) = e^{tw_1a_1^\top}x(0) - t b_1e^{tw_1 a_1^\top}w_1,\end{equation*}
\begin{equation*}
  x(t) = e^{tw_1a_1^\top}x(0) + t b_1w_1,  
\end{equation*}
and so
%\begin{equation*}    \rho(t,x) = \rho_{\mathrm{B}}\left(e^{-tw_1a_1^\top}x+tb_1w_1\right)\end{equation*}
\begin{equation*}
\rho(t,x) = \rho_{\mathrm{B}}\left(e^{-tw_1a_1^\top}x-e^{-tw_1a_1^\top}tb_1w_1\right) 
\end{equation*}
for $x\in\{\langle a_1,\cdot\rangle+b_1>0\}$.

The computations are easily repeated for $k\geq 1$.
\end{proof}

The following comparison principle will be useful.

\begin{lemma} \label{lem:comparison}
Let $\rho_1$ and $\rho_2$ be two solutions to \eqref{eq: cont.eq.clear} corresponding to the same $\theta$, and to data $\rho_1(0, \cdot)$ and $\rho_2(0, \cdot)$ respectively. 
Suppose $\rho_1(0, x)<\rho_2(0, x)$ for all $x\in\mathbb{R}^d$. Then $\rho_1(t, x)<\rho_2(t, x)$ for all $t\in[0, T]$ and $x\in\mathbb{R}^d$.  
\end{lemma}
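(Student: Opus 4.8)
The plan is to exploit the explicit representation of solutions from \Cref{lem: transport.multid.sol}. Since $\rho_1$ and $\rho_2$ solve \eqref{eq: cont.eq.clear} with the \emph{same} control $\theta=(w,a,b)$, which is piecewise constant with switching times $0=t_0<t_1<\dots<t_K=T$, it suffices to argue on a single interval $[t_{k-1},t_k]$ and then iterate. So I would first reduce to the claim: if $\rho_1(t_{k-1},x)<\rho_2(t_{k-1},x)$ for all $x\in\R^d$, then $\rho_1(t,x)<\rho_2(t,x)$ for all $t\in[t_{k-1},t_k]$ and $x\in\R^d$. Chaining this over $k=1,\dots,K$ and using the hypothesis at $t_0=0$ gives the result.

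On each such interval the control is the constant $(w_k,a_k,b_k)$, and \Cref{lem: transport.multid.sol} gives an explicit formula for $\rho_i(t,\cdot)$ in terms of $\rho_i(t_{k-1},\cdot)$. I would split into the two cases of the lemma. On the half-space $\{\langle a_k,\cdot\rangle+b_k\le 0\}$ the solution is frozen, $\rho_i(t,x)=\rho_i(t_{k-1},x)$, so the strict inequality is inherited verbatim. On the half-space $\{\langle a_k,\cdot\rangle+b_k>0\}$, when $w_k^\top a_k\neq 0$ we have
\begin{equation*}
\rho_i(t,x) = e^{-w_k^\top a_k(t-t_{k-1})}\,\rho_i\!\left(t_{k-1},\mathsf{A}_k(x)\right),
\end{equation*}
and when $\langle w_k,a_k\rangle=0$ we have $\rho_i(t,x)=\rho_i(t_{k-1},\mathsf{B}_k(x))$. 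In either case $\rho_i(t,x)$ is obtained from $\rho_i(t_{k-1},\cdot)$ by evaluation at a common point ($\mathsf{A}_k(x)$ or $\mathsf{B}_k(x)$, independent of $i$) multiplied by a common strictly positive scalar ($e^{-w_k^\top a_k(t-t_{k-1})}$ or $1$, independent of $i$). Hence $\rho_1(t_{k-1},\cdot)<\rho_2(t_{k-1},\cdot)$ pointwise immediately yields $\rho_1(t,x)<\rho_2(t,x)$ pointwise, for every $t\in[t_{k-1},t_k]$. This closes the induction step. One should also note that the two half-spaces together cover $\R^d$, so no point is missed, and continuity of $\rho_i(t,\cdot)$ in time at the switching points $t_k$ (part of the regularity class $\mathscr{C}^0([0,T];L^1\cap L^\infty)$) makes the chaining legitimate.

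I do not anticipate a genuine obstacle here: the whole statement is a direct corollary of the representation formula, since the flow map acts on the two initial data identically and the Jacobian-type prefactor is the same for both. The only mild point of care is bookkeeping — making sure the half-space on which the data is frozen and the half-space on which it is transported are handled separately and that the argument is applied to the correct sub-case ($w_k^\top a_k\neq 0$ versus $w_k^\top a_k=0$) on each interval — but no estimate or limiting argument is required, and strictness is preserved exactly because multiplication by a positive constant and precomposition with a (bijective) map both preserve strict pointwise inequalities.
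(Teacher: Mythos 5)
Your proof is correct, but it follows a different route than the paper. The paper argues directly via the method of characteristics applied to the \emph{difference} $f(t,x)=\rho_2(t,x)-\rho_1(t,x)$: along characteristics $\dot x(t)=v(t,x(t))$ one has $\tfrac{\diff}{\diff t}f(t,x(t))=-f(t,x(t))\,\mathrm{div}_x v(t,x(t))$, hence $f(t,x(t))=f(0,x(0))\exp\bigl(-\int_0^t\mathrm{div}_x v(s,x(s))\diff s\bigr)$, which manifestly preserves sign. You instead invoke the explicit representation formula of \Cref{lem: transport.multid.sol} and argue interval by interval, noting that on each half-space $\rho_i(t,\cdot)$ is obtained from $\rho_i(t_{k-1},\cdot)$ by a common precomposition and a common positive multiplicative factor. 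Both are instances of the same underlying mechanism (the flow acts by a positive multiplicative Jacobian factor that is independent of the data), but the paper's argument is shorter and avoids case-splitting on $w_k^\top a_k$, the half-spaces, or the iteration over switching intervals, all of which are already baked into the exponential integrating factor. Your route has the minor advantage of not needing to interpret $\mathrm{div}_x v$ along characteristics (the vector field is only piecewise smooth, a point the paper's one-liner implicitly handles by the fact that characteristics never cross the activation hyperplane on a constant-control interval), at the cost of more bookkeeping. Either proof is complete and correct.
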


\begin{proof}
Set $f(t,x)\coloneqq\rho_2(t,x)-\rho_1(t,x)$ and $v(t,x)\coloneqq w(t)(a(t)^\top x+b(t))_+$. Along the characteristics $\dot{x}(t) = v(t,x(t))$,
\begin{align*}
    \frac{\, \mathrm{d}}{\, \mathrm{d} t} f(t,x(t)) &= \partial_t f(t,x(t)) + \nabla_x f(t,x(t))^\top v(t,x(t))\\
    &=-f(t,x(t))\,\mathrm{div}_x v(t,x(t)).
\end{align*}
Thus
\begin{equation*}
    f(t,x(t))= f(0,x(0))\exp\left(-\int_0^t\mathrm{div}_x v(s,x(s))\, \mathrm{d} s\right)
\end{equation*}
for all $t\in[0, T]$ and $x(0)\in\mathbb{R}$. We may conclude.
\end{proof}

We also use the following elementary lemma.

\begin{lemma} \label{lem:variancecomparison}

Let $\rho_1$ and $\rho_2$ be two nonnegative and integrable functions that are proportional to Gaussian densities with covariance matrices $\Sigma_1$ and $\Sigma_2$ respectively. Suppose
\begin{equation*}
\left\langle (\Sigma_2^{-1}-\Sigma_1^{-1})e_i,e_i\right\rangle<0,\hspace{1cm}\text{ for some }i\in[d].
\end{equation*}
Then, for any $M>0$, there exists $M(i)>0$ such that 
\begin{equation*}
\rho_2(x)>\rho_1(x) \hspace{1cm}\text{ in } \left\{x\in\mathbb{R}^d\colon |x_i|>M(i) \text{ and } |x_j|<M \text{ for } j\neq i\right\}.
\end{equation*}
If $\Sigma_2^{-1}\prec \Sigma_1^{-1}$, then there exists some $\overline M>0$ such that 
\begin{equation*}
\rho_2(x)>\rho_1(x)\hspace{1cm}\text{ for all } \|x\|>\overline{M}.
\end{equation*}
\end{lemma}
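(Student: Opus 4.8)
The plan is to exploit the explicit Gaussian form of $\rho_1$ and $\rho_2$ and reduce everything to comparing quadratic forms in the exponent. Write $\rho_j(x) = c_j \exp\!\big(-\tfrac12 \langle \Sigma_j^{-1}(x-m_j), x-m_j\rangle\big)$ for $j=1,2$, where $c_j>0$ are the (arbitrary, since we only assume proportionality) normalizing constants and $m_j\in\R^d$ the means. The inequality $\rho_2(x)>\rho_1(x)$ is equivalent to
\begin{equation*}
Q(x) \coloneqq \tfrac12\langle \Sigma_1^{-1}(x-m_1),x-m_1\rangle - \tfrac12\langle \Sigma_2^{-1}(x-m_2),x-m_2\rangle + \log\frac{c_2}{c_1} > 0.
\end{equation*}
Expanding, $Q(x) = \tfrac12\langle(\Sigma_1^{-1}-\Sigma_2^{-1})x,x\rangle + \ell(x) + \kappa$, where $\ell$ is an affine-linear function of $x$ and $\kappa\in\R$ a constant, both depending on $m_1,m_2,\Sigma_1,\Sigma_2,c_1,c_2$ but not on the region we will restrict to. So $Q$ is a quadratic polynomial whose leading (degree-two) part is governed by $\Sigma_1^{-1}-\Sigma_2^{-1}$, and the whole content of the lemma is that a sign condition on this matrix forces $Q>0$ far enough out in the appropriate directions.

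For the first assertion, restrict to the slab $S \coloneqq \{x : |x_j|<M \text{ for } j\neq i\}$ and let $|x_i|\to\infty$. On $S$, all coordinates except $x_i$ are bounded, so $\ell(x)$ and the cross-terms $\langle(\Sigma_1^{-1}-\Sigma_2^{-1})x,x\rangle$ involving $x_j$ with $j\neq i$ are at most linear in $|x_i|$ with coefficients bounded uniformly over $S$ (in terms of $M$ and the fixed data), while the pure $x_i^2$ term contributes $\tfrac12\langle(\Sigma_1^{-1}-\Sigma_2^{-1})e_i,e_i\rangle\, x_i^2$, whose coefficient is strictly positive by hypothesis. Hence $Q(x)\to+\infty$ as $|x_i|\to\infty$ uniformly over $S$, and there is $M(i)>0$ (depending on $M$) so that $Q>0$ whenever $x\in S$ and $|x_i|>M(i)$; this is exactly the claimed region.

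For the second assertion, if $\Sigma_2^{-1}\prec\Sigma_1^{-1}$ then $\Sigma_1^{-1}-\Sigma_2^{-1}$ is positive definite, so its smallest eigenvalue $\lambda_{\min}>0$ gives $\tfrac12\langle(\Sigma_1^{-1}-\Sigma_2^{-1})x,x\rangle \geq \tfrac{\lambda_{\min}}{2}\|x\|^2$, and this quadratic growth dominates the at-most-linear terms $\ell(x)+\kappa$; thus $Q(x)\to+\infty$ as $\|x\|\to\infty$ and there is $\overline M>0$ with $Q>0$ on $\{\|x\|>\overline M\}$, as desired. The only mild subtlety — not really an obstacle — is bookkeeping the cross-terms and the dependence of $M(i)$ on $M$ in the first part; everything else is a routine comparison of a strictly positive-definite quadratic against lower-order terms. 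No nondegeneracy of $\Sigma_1-\Sigma_2$ itself is needed, only the stated sign conditions on the inverse covariances.
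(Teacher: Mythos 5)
Your proof is correct and takes essentially the same approach as the paper's: the paper simply writes $\rho_2(x)/\rho_1(x)\propto e^{-\frac12\langle(\Sigma_2^{-1}-\Sigma_1^{-1})x,x\rangle+O(\|x\|)}$ and declares the proof to follow, which is exactly your quadratic-dominates-linear comparison, compressed into one line. Your write-up fills in the bookkeeping (slab restriction, cross-terms, uniform bounds) that the paper leaves implicit.
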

\begin{proof}
Write
    \begin{equation*}
    \frac{\rho_2(x)}{\rho_1(x)}\propto e^{-\frac{1}{2}\left\langle\left(\Sigma_2^{-1}-\Sigma_1^{-1}\right)x,x\right\rangle+O(\|x\|)}.
    \end{equation*}
 The proof follows.
\end{proof}

\section{Proofs}

\subsection{Proof of \Cref{thm:KLcontrolmultid}} \label{sec: proof.thm1}

\begin{proof}[Proof of \Cref{thm:KLcontrolmultid}]
    Fix $\varepsilon_0>0$; $\theta=(w,a,b)$ takes the form
\begin{equation*}
    \theta(t)=\bar\theta(t)1_{\left[0,\frac{T}{2}\right]}(t)+\sum_{k=1}^{2d}\theta_k 1_{[T_{k-1},T_k]}(t),
\end{equation*}
where $T_k\coloneqq\frac{T}{2}+\frac{kT}{4d}$, and $\bar\theta$ is piecewise constant with at most $\left\lceil \frac{2R(\varepsilon)}{h(\varepsilon)}\right\rceil^d(d+10)$ switches and such that
\begin{equation} \label{eq:TVproof}
 \left\| \rho\left(\frac{T}{2}\right)-\rho_*\right\|_{L^1(\mathbb{R}^d)}\leq\varepsilon_0.
 \end{equation}
Such a $\bar\theta$ exists by virtue of \cite[Theorem 1]{ruiz2024control}. We seek to construct $\theta_1,\ldots,\theta_{2d}$ so that 
\begin{equation*}
    \mathsf{L}(\rho_*,\rho(T)) = \lim_{\|x\|\to+\infty} \frac{\rho_*(x)}{\rho(T,x)} = 0.
\end{equation*}

\subsubsection*{Step 1.}
Thanks to \Cref{lem: transport.multid.sol}, there exists a finite $n\geq 1$, as well as polyhedra $P_i\subset\mathbb{R}^d$ and coefficients $(\alpha_i,A_i,\beta_i)\in\mathbb{R}_{>0}\times\mathscr{M}_{d\times d}(\mathbb{R})\times \mathbb{R}$ with $\mathrm{spec}(A_i)\subset(0,+\infty)$ for $i\in[n]$, such that
\begin{equation*}
\rho\left(\frac{T}{2},x\right)=\sum_{i=1}^n\alpha_i\rho_{\mathrm{B}}(A_i x+\beta_i)1_{P_i}(x) \hspace{1cm} \text{ for all } x\in\mathbb{R}^d.
\end{equation*}
Moreover, the polyhedra $P_i$ form a partition of $\mathbb{R}^d$, which implies $\rho\left(T/2\right)>0$. Each $\rho_i\coloneqq\alpha_i\rho_{\mathrm{B}}(A_i \cdot+\beta_i)$ is a Gaussian function on $\mathbb{R}^d$, and the Gaussian probability density $\mathscr{Z}_i^{-1}\rho_i$ has covariance matrix
\begin{equation*}
\Sigma_i\coloneqq
A_i^{-1}\Sigma_{\mathrm{B}}\left(A_i^{-1}\right)^\top.
\end{equation*} 
Take $0<\underline{\sigma}^2<\min_{i\in[n]}\mathrm{spec}(\Sigma_i)$ and define
\begin{equation} \label{eq:defrholb}
\underline{\rho}(x)\coloneqq \alpha \exp\left(-\frac{\|x\|^2}{2\underline{\sigma}^2}\right)
\end{equation}
for $x\in\mathbb{R}^d$, where $\alpha>0$ is a constant to be specified. 
Then $\Sigma_i^{-1}\prec I_{d}/\underline{\sigma}^2$ for all $i\in[n]$, so by virtue of \Cref{lem:variancecomparison}, there exists $M(i)>0$ such that
\begin{equation*}
    \underline{\rho}(x)<\rho_i(x)\hspace{2.2cm}\text{ for all } \quad \|x\|> M({i}).
\end{equation*}
Taking $\overline M\geq M \vee \max_{i\in[n]}M(i)$ it follows that \eqref{eq:condlowgaussmultid} and $\underline{\rho}(x)<\rho\left(T/2, x\right)$ hold for all $\|x\|>\overline{M}$.
%\eqref{eq:condlowgaussmultid} holds, as well as 
%\begin{equation*} 
%   \underline{\rho}(x)<\rho\left(\frac{T}{2}, x\right) \hspace{1cm} \text{ for all } \quad \|x\|> \overline M.
%\end{equation*}
Choosing $\alpha$ small enough, we can deduce 
\begin{equation} \label{eq: lnTm2d}
    \underline{\rho}(x)<\rho\left(\frac{T}{2}, x\right) \hspace{1cm} \text{ for all } x\in\mathbb{R}^d,
\end{equation}
and $\overline M$ large enough to deduce
 \begin{equation}\label{eq:outermassmultid}
\int_{[-\overline M,\overline M]^d}\rho_*(x)\, \mathrm{d} x \wedge \int_{[-\overline M,\overline M]^d}\rho\left(\frac{T}{2}, x\right)\, \mathrm{d} x>1-\varepsilon_0. 
\end{equation}   

\subsubsection*{Step 2.}
We will choose $\theta_{1},\dots,\theta_{2d}$ so that
\begin{equation}\label{eq: bound rholb}
 \rho_\bullet(x)<\underline{\rho}(T, x),\hspace{1cm}\text{in  }\mathbb{R}^d\setminus[-\overline{\overline{M}},\overline{\overline{M}}]^d, 
 \end{equation} 
for some larger $\overline{\overline{M}}\geq\overline{M}$, where $\underline{\rho}(t, \cdot)$ is the unique solution to \eqref{eq: cont.eq} in $\left[T/2,T\right]$ with data $\underline{\rho}\left(T/2, \cdot\right)=\underline{\rho}(\cdot)$. To this end, take 
\begin{align*}
    \theta_{2k-1} &\coloneqq(\omega e_k,e_k,-\overline{M}),\\
    \theta_{2k} &\coloneqq(-\omega e_k,-e_k,-\overline{M}),
\end{align*}
where $\omega>0$ is to be chosen, for $k\in[d]$. 
By virtue of \Cref{lem: transport.multid.sol}, the solution reads
\begin{align*}
        \underline{\rho}(t,x) =& \underline{\rho}(T_{2k-2}, x)1_{\left\{x\colon x_k\leq \overline{M}\right\}}(x) \\
        &+ e^{-\omega(t-T_{2k-2})}\underline{\rho}\left(T_{2k-2}, \mathsf{A}_{2k-2}(t,x)\right) 1_{\left\{x\colon x_k> \overline{M}\right\}}(x)
    \end{align*}
if $t\in[T_{2k-2},T_{2k-1}]$, and 
\begin{align*}
        \underline{\rho}(t,x) =& \underline{\rho}(T_{2k-1}, x)1_{\left\{x\colon x_k\geq- \overline{M}\right\}}(x) \\
        &+ e^{-\omega(t-T_{2k-1})}\underline{\rho}\left(T_{2k-1}, \mathsf{A}_{2k-1}(t,x)\right) 1_{\left\{x\colon x_k< -\overline{M}\right\}}(x)
    \end{align*}
if $t\in[T_{2k-1},T_{2k}]$, where
    \begin{equation*}
    \mathsf{A}_{j}(t, x) = e^{-\omega(t-T_{j})e_ke_k^\top}x+(-1)^{j}\overline{M}\left(1-e^{-\omega(t-T_{j})}\right)e_k
    \end{equation*}
    for $j\in\{2k-2,2k-1\}$, for all $k\in[d]$ and $x\in\mathbb{R}^d$. 
    It ensues that 
    \begin{align} \label{eq: rholb}
        \underline{\rho}(T, x) =& \underline{\rho}\left(\frac{T}{2}, x\right)1_{[-\overline{M},\overline{M}]^d}(x) \nonumber \\
        &+\sum_{k=1}^d \sum_{\ell=0}^1\underline{\rho}_{1}^{k\ell}(x) 1_{\mathscr{S}_{k\ell}}(x)+\sum_{q\in\{0,1\}^{d}}\underline{\rho}_{2}^q(x) 1_{\mathscr{Q}_q}(x),
    \end{align} 
    where $\underline{\rho}_{1}^{k\ell}$, $\underline{\rho}_{2}^k$ are Gaussian functions on $\mathbb{R}^d$, and $\mathscr{Q}_q$, $\mathscr{S}_{k\ell}$ are defined by
    \begin{equation*}
       \mathscr{Q}_q\coloneqq\left\{x \in \mathbb{R}^d : (-1)^{q_j}x_j>\overline{M}\quad\text{for all }j\in[d]\right\}  
     \end{equation*}
    and 
    \begin{equation*}
    \mathscr{S}_{k\ell}\coloneqq\left\{(-1)^{\ell}x_k>\overline{ M}\right\}\setminus\bigcup_{q\in\{0,1\}^d}\mathscr{Q}_q.
    \end{equation*}   
   % $\mathscr{Q}_u$ are the connected components of the set of corner hyperrectangles
 %   \begin{equation*}  \mathscr{Q}\coloneqq\left\{x \in \mathbb{R}^d : \min_{j\in[d]}|x_j|>\overline{M}\right\},    \end{equation*} 
 %   and 
 %   \begin{equation*}    \mathscr{S}_{k\ell}\coloneqq\left\{(-1)^{\ell}x_k>\overline{ M}\right\}\setminus\mathscr{Q}.    \end{equation*} 
    The Gaussian density $\mathscr{Z}^{-1}\underline{\rho}_{1}^{k\ell}$ has covariance
    \begin{align*}
    \Sigma_{1,k\ell}&=\left(e^{-\frac{\omega T}{4d}e_ke_k^\top}\right)^{-1}\underline{\sigma}^2I_d\left(\left(e^{-\frac{\omega T}{4d}e_ke_k^\top}\right)^{-1}\right)^\top\notag\\
    &=\underline{\sigma}^2\left( I_d+(e^{-\frac{\omega T}{4d}}-1)e_k e_k^\top\right)^{-1}\left( I_d+(e^{-\frac{\omega T}{4d}}-1)e_ke_k^\top\right)^{-1}\notag\\
    &=\underline{\sigma}^2\left( I_d+(e^{\frac{\omega T}{4d}}-1)e_k e_k^\top\right)\left( I_d+(e^{\frac{\omega T}{4d}}-1)e_ke_k^\top\right)\\
    &=\underline{\sigma}^2\left(I_d+(e^{\frac{\omega T}{2d}}-1)e_k e_k^\top\right),
\end{align*}
%    where in the second equality we used $ e^{\lambda e_ke_k^\top} = I_d+(e^{\lambda}-1)e_ke_k^\top$ for  $\lambda\in\mathbb{R}$.
    Taking 
    \begin{equation}\label{eq: omega}
    \omega >\frac{2d}{T}\log\left(\frac{\sigma_\bullet^2}{\underline{\sigma}^2}\right),
    \end{equation}
    we find for each $k\in[d]$, $\ell\in\{0,1\}$, that
    \begin{equation*}
    \left\langle\left(\Sigma_{1,k\ell}^{-1}-\frac{1}{\sigma_\bullet^2}I_d\right)e_k,e_k\right\rangle=\frac{e^{-\frac{\omega T}{2d} }}{\underline{\sigma}^{2}}-\frac{1}{\sigma_\bullet^{2}}<0.
\end{equation*}
A similar computation for the covariance of $\mathscr{Z}^{-1}\underline{\rho}_{2}^q$ gives  $\Sigma_{2,q}=\underline{\sigma}^2e^{\frac{\omega T}{2d}}I_d.$
We see that $\Sigma_{2,q}^{-1}\prec I_d/\sigma_\bullet^2$ for all $q\in\{0,1\}^d$ when $\omega$ satisfies \eqref{eq: omega}. 
By \Cref{lem:variancecomparison} there exists some $M(0)\geq \overline{M}$ such that 
\begin{equation*}
\rho_\bullet(x)<\underline{\rho}_2^q\left(x\right)\hspace{1cm}\text{ for all } \|x\|>M(0),
\end{equation*} 
for $q\in\{0,1\}^d$. Also by \Cref{lem:variancecomparison}, for all $k\in[d]$ and $\ell\in\{0, 1\}$ there exists $M(k)>0$ such that \begin{equation*}
    \rho_\bullet(x)<\underline{\rho}_{1}^{k\ell}\left(x\right)\hspace{1cm}\text{ in } \left\{x\in\mathbb{R}^d\colon |x_k|>M(k) \text{ and } |x_j|<M(0)\text{ for } j\neq k \right\}.
\end{equation*}
Thanks to \eqref{eq: rholb}, taking $\overline{\overline{M}}\coloneqq\max_{k\in\{0,\ldots,d\}} M(k)$ ensures \eqref{eq: bound rholb}. 

\subsubsection*{Step 3.}

Combining \eqref{eq:condlowgaussmultid}, \eqref{eq: lnTm2d}, \eqref{eq: bound rholb} and \Cref{lem:comparison}, we find
\begin{equation*}
\rho_*(x)<\rho(T, x),\hspace{1cm}\text{ in }\mathbb{R}^d\setminus[-\overline{\overline{M}},\overline{\overline{M}}]^d.
\end{equation*} 
In particular, this implies $\mathsf{L}(\rho_*,\rho(T))=0$, %\footnote{\color{red}This is true, but actually for obtaining $\mathsf{M}(\rho_*,\rho(T))<+\infty$ we can simply use that $\rho_*(x)<\rho(T, x)$ in $\mathbb{R}^d\setminus[-\overline{\overline{M}},\overline{\overline{M}}]^d$ together with  $\min_{x\in [-\overline{\overline{M}},\overline{\overline{M}}]^d}\rho\left(T,x\right)>0$, without having to argue with the limit $\mathsf{L}$. }
where $\mathsf{L}$ is defined in \eqref{eq:condtails}.
We moreover have
\begin{equation*}  
\min_{x\in [-\overline{\overline{M}},\overline{\overline{M}}]^d}\rho\left(T,x\right)>\min_{x\in [-\overline{\overline{M}},\overline{\overline{M}}]^d}\underline{\rho}\left(T,x\right)>0
\end{equation*}
by \eqref{eq:defrholb}, \eqref{eq: lnTm2d} and \eqref{eq: rholb},  so we conclude that \begin{equation*}
\sup_{x\in\mathbb{R}^d}\frac{\rho_*(x)}{\rho(T,x)}<+\infty.
\end{equation*}
On the other hand, $\rho(T, x) = \rho(T/2, x)$ for $x\in[-\overline{M},\overline{M}]^d$, which, combined with \eqref{eq:TVproof} and \eqref{eq:outermassmultid}, yields
\begin{align*}
    \left\|\rho(T)-\rho_*\right\|_{L^1(\mathbb{R}^d)}\leq
    &\int_{[-\overline{M},\overline{M}]^d} \left|\rho\left(\frac{T}{2},x\right)-\rho_*\left(x\right)\right|\, \mathrm{d} x\\
    &+\int_{\mathbb{R}^d\setminus[-\overline{M},\overline{M}]^d}\rho(T,x)\, \mathrm{d} x\\&+\int_{\mathbb{R}^d\setminus[-\overline{M},\overline{M}]^d}\rho_*(x)\, \mathrm{d} x<\,3\varepsilon_0.
\end{align*}
Taking $\varepsilon_0$ small enough and applying \Cref{lem:revpinsker1}, we conclude. 
\end{proof}

\subsection{Proof of \Cref{thm:revKLcontrolmultid}} \label{sec: proof.thm2}

\begin{proof}[Proof of \Cref{thm:revKLcontrolmultid}]
The proof is almost identical to that of \Cref{thm:KLcontrolmultid}--we only discuss the differences. 

In Step 1, we take $\overline{\sigma}^2>\max_{i\in[n]}\mathrm{spec}(\Sigma_i)$ instead of $\underline{\sigma}^2$, defining $\overline{\rho}(x)$ as $\underline{\rho}(x)$ in \eqref{eq:defrholb}, as to have $\overline{\rho}(x)>\rho(T/2,x)$ for all $x\in\mathbb{R}^d$. 

In Step 2, we take 
\begin{align*}
    \theta_{2k-1} &=(-\omega e_k,e_k,-\overline{M}),\\
    \theta_{2k} &=(\omega e_k,-e_k,-\overline{M}),
\end{align*}
for $k\in[d]$, with $\omega >2d\log\left(\sigma_\bullet^2/\overline{\sigma}^2\right)/T.$
Then the same argument as in Step 2 yields $\overline{\rho}(T,x)<\rho_\bullet(x)$ for all $x\in \mathbb{R}^d\setminus[-\overline{\overline{M}},\overline{\overline{M}}]^d$,
for some $\overline{\overline{M}}\geq \overline{M}$.

In Step 3, \Cref{lem:comparison} then yields $\rho(T, x)<\rho_*(x)$ for all $x\in \mathbb{R}^d\setminus[-\overline{\overline{M}},\overline{\overline{M}}]^d$. This implies %$\mathsf{L}(\rho(T),\rho_*)=0$, and 
$\sup_{x\in\mathbb{R}^d}\frac{\rho(T,x)}{\rho_*(x)}<+\infty$ because $\rho_*>0$ in $\mathbb{R}^d$. The conclusion follows thereupon.    
\end{proof}

\subsection{Beyond Gaussians} \label{sec: on.remark.gauss}

In this section we comment on how the extension discussed in \Cref{rem:extKL} can be accounted for in the above proofs.
Fix $\varepsilon_0>0$. From \cite{azagra_global_2013} we deduce the existence of a convex $U_\bullet\in\mathscr{C}^2(\mathbb{R}^d)$ with $U_{\bullet}\leq \operatorname{conv}U_*$ such that 
\begin{equation*}
    \|U_{\bullet}-\operatorname{conv}U_*\|_{\mathscr{C}^0(\mathbb{R}^d)}<\varepsilon_0.
\end{equation*}
For any $A\in\mathscr{M}_{d\times d}(\mathbb{R})$ with $\operatorname{spec}(A)\subset(0,+\infty)$, the $\lambda_A>0$ from \eqref{eq:condlowmultidB} gives
\begin{align*}
    &\lim_{\|x\| \to +\infty}\frac{U_\bullet(\lambda_A x)}{U_{\mathrm{B}}(A x)}\\
    &=\lim_{\|x\| \to +\infty}\frac{U_\bullet(\lambda_A x)-\operatorname{conv}U_*(\lambda_A x)}{U_{\mathrm{B}}(A x)}+\frac{\operatorname{conv}U_*(\lambda_A x)}{U_{\mathrm{B}}(A x)}=+\infty.
\end{align*}
We now show that $ e^{-U_{\bullet}}$ satisfies a comparison principle similar to \cref{lem:variancecomparison}. Namely, given any $P,Q$ diagonal with positive entries and $\beta\in\mathbb{R}^d$, we prove that
\begin{equation}\label{eq:compubullet1}
\lim_{|x_i| \to +\infty }U_{\bullet}(Px+\beta)-U_{\bullet}(Qx)=+\infty\hspace{1cm}\text{if }P_{i}>Q_{i},
\end{equation}
and 
\begin{equation} \label{eq:compubullet2}
\lim_{\|x\| \to +\infty}U_{\bullet}(Px+\beta)-U_{\bullet}(Qx)=+\infty\hspace{1cm}\text{if }P\succ Q.
\end{equation}
%Consider two diagonal matrices $P,Q\in\mathscr{S}_{d}^+(\mathbb{R})$ and assume $\|x\|\neq0$. 
%We have \footnote{In the inequality we have used that $\|x\|\lesssim \|Px\|+\|Qx\|\lesssim\|x\|$.}
%\begin{align*}
%\|Px\|-\|Qx\|= \frac{\|Px\|^2-\|Qx\|^2}{\|Px\|+\|Qx\|} \gtrsim \frac{\sum_{i\in[d]}(P_{ii}^2-Q_{ii}^2)|x_i|^2}{\|x\|},
%\end{align*}
%{\color{blue} if $\|x\|$ is sufficiently large, I can get ride of $p$ and $q$ no? like $\|x\|\|P\frac{x}{\|x\|}+\frac{p}{\|x\|}\|$ then as $\|x\|$ grows, $p$ is irrelevant and the same for $q$, hence, could we just work with $\|Px\|-\|Qx\|$? which would give the inequality by multiplying by $\frac{\|Px\|+\|Qx\|}{\|Px\|+\|Qx\|}$. Furthermore, does this reasoning, together with the lower linear bound on the growth, allow to remove $\beta$ from the statement?}
%Note that: \begin{enumerate}
% \item If $P_{ii}>Q_{ii}$ for some $i$, and $\max_{j\neq i}|x_j|\lesssim1$, then $\|Px\|-\|Qx\|\gtrsim |x_i|-1$.
 %   \item If $P_{ii}>Q_{ii}$ for all $i$, then $\|Px\|-\|Qx\|\gtrsim \|x\|$.
%\end{enumerate}
If $\varepsilon_0>0$ is small enough, thanks to \eqref{eq:condlowmultid*} we have
\begin{equation*}
    U_\bullet(x)\gtrsim\|x\|\hspace{2.8cm} \text{for all }\|x\|>M_*.
\end{equation*}
By virtue of $U_\bullet\in\mathscr{C}^2$ and convexity, there exists $M_\bullet>0$ such that for all $i\in[d]$,
\begin{equation*}
    \partial_{x_i}U_{\bullet}(x)|x_i|\gtrsim x_i\hspace{2cm}\text{ for all } \|x\|>M_\bullet.
\end{equation*}
Take $\|x\|>M_\bullet$ and, without loss of generality, suppose $x\in(\mathbb{R}_{>0})^d$, so that 
\begin{equation*}
    \partial_{x_i}U_{\bullet}(x)\gtrsim 1
\end{equation*}
for all $i\in[d]$.
We now decompose
\begin{align*}
    U_{\bullet}(Px)-U_{\bullet}(Qx)&=U_{\bullet}(Px)-U_{\bullet}\left(P_1x_1,\dots,P_{d-1}x_{d-1},Q_dx_d\right)\\
&\hspace{1cm}+\cdots+U_{\bullet}\left(P_1x_1,Q_2x_2,\dots,Q_dx_d\right)-U_{\bullet}\left(Qx\right)  \\
&=\sum_{i=1}^d\int_{(Qx)_i}^{(Px)_i}\partial_{x_i}U_{\bullet}((Px)_{<i},t,(Qx)_{>i})dt,
\end{align*}
where
\begin{equation*}
    ((Px)_{<i},t,(Qx)_{>i})=((Px)_1,\dots,(Px)_{i-1},t,(Qx)_{i+1},\dots,(Qx)_n).
\end{equation*}
Suppose $P_{i}>Q_{i}$ for all $i$. We deduce:
\begin{equation*}
    U_{\bullet}(Px)-U_{\bullet}(Qx)\gtrsim \sum_{i=1}^d(P_{i}-Q_{i})x_i\gtrsim \|x\|.
\end{equation*}
Now suppose $P_{i}>Q_{i}$ for some $i\in[d]$. If we fix $x_j$ for all $j\neq i$, we deduce
\begin{equation*}
     U_{\bullet}(Px)-U_{\bullet}(Qx)      \gtrsim (P_{i}-Q_{i})x_i-1\gtrsim x_i-1.
\end{equation*}
These inequalities give \eqref{eq:compubullet1} and \eqref{eq:compubullet2}. %\footnote{There might be a simpler way to prove \eqref{eq:compubullet1}, \eqref{eq:compubullet2} which does not even require differentiability of $U_\bullet$: Let $U_\bullet=\operatorname{conv}(U)$. It should be enough to show that for every $x\in\mathbb{R}^d$ and $k\in[d]$ it holds
%\begin{equation}\label{eq:alt}\lim_{n\to+\infty}U_\bullet (x+(n+1)e_k)-U_\bullet (x+ne_k)\geq c\end{equation}
%for some $c>0$. By convexity, we have $U_\bullet(x+e_k)\leq \frac{1}{2}(U_\bullet(x)+U_\bullet(x+2e_k))$, so
%$$U_\bullet(x+e_k)-U_\bullet(x)\leq U_\bullet(x+2e_k)-U_\bullet(x+e_k).$$
%Repeating the argument iteratively, we should conclude with \eqref{eq:alt}.}

It remains to outline the adjustments in the proof of \cref{thm:KLcontrolmultid} required to generalize the result. The  generalization of \cref{thm:revKLcontrolmultid} is completely analogous.

In Step 1, since $\beta_i\in\mathbb{R}^d$ and $\alpha_i>0$ are fixed for all $i\in[n]$, %\footnote{For any $i\in[n]$, there exists $m_i>0$ such that $$U_\bullet(\lambda_Ax)>3U_{\mathrm{B}}(Ax)\hspace{1cm}\text{if }\|x\|>m_i.$$ If $m_i>0$ is large enough, we have  $$U_\bullet(\lambda_Ax)>2U_{\mathrm{B}}(Ax)+U_{\mathrm{B}}(Ax)>U_{\mathrm{B}}(Ax+\beta)-\log\alpha_i\hspace{1cm}\text{if }\|x\|>m_i.$$Take $\overline M=\max_{i\in[n]}m_i$. } 
 thanks to  \eqref{eq:condlowmultidB} we can find $\underline{\lambda}>\max_{i\in[n]}\lambda_{A_i}$ and $\overline M>0$ such that \begin{equation*}
    U_\bullet (\underline{\lambda}x)>\max_{i\in[n]}U_{\mathrm{B}}(A_ix+\beta_i)-\log\alpha_i \hspace{1.4cm}\text{for all }\|x\|>\overline M.
\end{equation*}
We define $\underline{\rho}(x)=\alpha e^{-U_\bullet(\underline{\lambda} x)}$ for all $x\in\mathbb{R}^d$, which satisfies
\begin{equation*} 
   \underline{\rho}(x)<\min_{i\in[n]}\alpha_i\rho_{\mathrm{B}}(A_i x+\beta_i)\leq\rho\left(\frac{T}{2}, x\right) \hspace{1cm} \text{ for all } \quad \|x\|> \overline M.
\end{equation*}
Choosing $\alpha$ small enough, we can deduce $\underline{\rho}(x)<\rho\left(T/2, x\right)$ for all $x\in\mathbb{R}^d$.

In Step 2, we take the same controls $\theta_1,\dots,\theta_{2d}$ that give \eqref{eq: rholb}, now with \begin{align*}
        \underline{\rho}_{1}^{k\ell}(x)&=e^{-\frac{\omega T}{4d}}\underline{\rho}\left(\left(I_d+(e^{-\frac{\omega T}{4d}}-1)e_ke_k^\top\right)x+%\lambda_{\ell}e_k
        \beta_{1,k\ell}\right), \\
        \underline{\rho}_{2}^q(x)&=e^{-\frac{\omega T}{4}}\underline{\rho}\left(e^{-\frac{\omega T}{4d}}x+\beta_{2,q}\right)
    \end{align*}
     for some %$\lambda_{\ell}\in\mathbb{R}$ and 
     $ \beta_{1,k\ell},\beta_{2,q}\in\mathbb{R}^d$. %$\lambda(\ell)=(-1)^{\ell}\overline{M}\omega\left(1-e^{-\frac{\omega T}{4d}}\right)\in\mathbb{R}$ and $b(q)=(\lambda(q_i))_{i=1}^d\in\mathbb{R}^d$. 
Take $\omega>4d\log\underline{\lambda}/T$, which gives $\underline{\rho}(T,x)>e^{-U_\bullet(x)}$ for $\|x\|$ large enough, thanks to \eqref{eq:compubullet1} and \eqref{eq:compubullet2}. Since $U_\bullet\leq U_*$ by construction,   \cref{lem:comparison} ensures the existence of $\overline{\overline{M}}\geq\overline{M}\vee M_\bullet$ such that \begin{equation*}
\rho_*(x)<\underline{\rho}(T, x)<\rho(T,x),\hspace{1cm}\text{in  }\mathbb{R}^d\setminus[-\overline{\overline{M}},\overline{\overline{M}}]^d.
\end{equation*} 

\subsection{Proof of \Cref{prop: xlogx}} \label{sec: proof.propxlogx}

\begin{proof}[Proof of \Cref{prop: xlogx}]

For $x\in\mathbb{R}$ and $t\in\mathbb{R}$, consider
\begin{equation*} 
x(t)=
\begin{cases}
x^{e^t} &\text{ if } x> 1\\
x &\text{ if } x\leq 1.
\end{cases}
\end{equation*}
This function solves
\begin{equation*} 
\begin{cases}
    \dot{x}(t)=(|x(t)|\log x(t)_+)_+& t\in\mathbb{R}\\
    x(0)=x.
    \end{cases}
\end{equation*}
It is in fact the unique solution. Indeed, the local Lipschitz condition guarantees uniqueness on the maximal interval of existence, and $x(t)$ is defined for all $t\in\mathbb{R}$. %by contradiction, assume that there exists $y\in\mathscr{C}^0(\mathbb{R})$ such that for some interval $I\subset\mathbb{R}$, $y(t)\neq x(t)$ whilst $y(0)=x(0)$. Then, there exist $t_1$ and $\varepsilon_0>0$ for which $x(t_1)=y(t_1)$ and $x(t_1+\varepsilon)\neq y(t_1+\varepsilon)$ for all $0<\varepsilon\leq \varepsilon_0$. Therefore the Cauchy problem starting at $t_1$ does not have a unique solution. This violates the local-in-time well-posedness of ODEs with locally Lipschitz fields.

One can thus define the solution $\rho(t,x)$ to \eqref{eq: log.transport} uniquely in the weak sense.  
We focus on the behavior of the solution $\rho(t,\cdot)$ on $\{x\colon x>1\}$. By the method of characteristics, we can write
    \begin{equation*}
        \frac{\, \mathrm{d}}{\, \mathrm{d} t}\rho(t, x(t))=-(\log x(t)+1)\rho(t, x(t)),
    \end{equation*}
    which translates into
    \begin{equation*}
        \int_0^t\frac{\, \mathrm{d}}{\, \mathrm{d} s}\rho(s, x(s))\rho(s, x(s))^{-1}\, \mathrm{d} s=\int_0^t-(\log(x)e^{s}+1)\, \mathrm{d} s.
    \end{equation*}
    Whereupon,
    \begin{equation*}
        \log \rho(t, x(t))-\log \rho_{\mathrm{B}}(x)=-\left(\log(x)(e^{t}-1)+t\right),
    \end{equation*}
    whence
    \begin{equation*}
        \rho(t, x(t))=\rho_{\mathrm{B}}(x)\exp\left(-\log x^{(e^t-1)}-t\right)
    \end{equation*}
    and
    \begin{equation*}
        \rho(t, x^{e^t})=e^{-t}\rho_{\mathrm{B}}(x)x^{1-e^t}.
    \end{equation*}
    Taking $y=x^{e^t}$, we obtain $y^{e^{-t}}=x$ and 
    \begin{equation*}
        \rho(t, y)=e^{-t}\rho_{\mathrm{B}}\left(y^{e^{-t}}\right)y^{e^{-t}-1}.
    \end{equation*}
    Upon considering the case $x\leq 1$, we conclude
    \begin{equation*}
    \rho(t, x)=\rho_{\mathrm{B}}(x)1_{\{x\colon x\leq 1\}}(x)+e^{-t}\rho_{\mathrm{B}}\left(x^{e^{-t}}\right)x^{e^{-t}-1}1_{\{x\colon x> 1\}}(x).
    \end{equation*}
    We thus see that
    \begin{align*}
        \lim_{x\to +\infty}\frac{\rho(t,x)}{\rho_{\mathrm{B},q}(x)}\propto e^{-t}\lim_{x\to +\infty}\frac{\exp\left(-x^{pe^{-t}}\right)x^{e^{-t}-1}}{\exp(-x^q)}.
    \end{align*}
    It therefore suffices to take
    \begin{equation*}
        T_q>\log\left(\frac{p}{q}\right),
    \end{equation*}
    which is positive if $p>q$, to ensure 
    \begin{equation*}
        \lim_{x\to +\infty}\frac{\rho(T_q,x)}{\rho_{\mathrm{B},q}(x)}<+\infty.\qedhere
    \end{equation*}    
\end{proof}

\begin{remark}
Observe that by simply changing the sign of the vector field (or reversing time), we obtain the same result for $q\geq p$.
\end{remark}

\subsection{Proof of \texorpdfstring{\Cref{thm:exactcontrol}}{Proof of Theorem \ref{thm:exactcontrol}}}\label{sec: exact.matching}

Ultimately, $\theta=(w,b)$ takes the form
\begin{equation*}
    \theta(t)=\sum_{k=1}^{4n+3}\theta_k 1_{[T_{k-1},T_k]}(t),
\end{equation*}
where $\theta_k=(w_k, b_k)\in\mathscr{M}_{d\times d}(\mathbb{R})\times\mathbb{R}^d$ are constant and determined throughout the proof, whereas $T_k\coloneqq\frac{kT}{4n+3}$. 

\subsubsection*{Step 1.}
We begin by taking $\theta_1=(w_1,b_1)$ with
\begin{equation*}
w_1=0, \hspace{1cm} b_1=\beta_1 {\bf1}
\end{equation*}
for $\beta_1>0$. Then for all $i\in[n]$, 
\begin{equation*}
 x^{(i)}(t)=x^{(i)}(0)+t\beta_1\, {\bf1}  \hspace{1cm} \text{ for } t\in[0, T_1].
\end{equation*}
Taking $\beta_1$ large enough, we find
\begin{equation*}
    x_k^{(i)}(T_1)>0 \hspace{1cm}\text{ for all }k\in[d].
\end{equation*}
We now take $\theta_2=(w_2,b_2)$ with
\begin{equation*} 
w_2=\alpha_1\begin{bmatrix} 0 &\ldots & 0 & \ldots & 0\\ v_1 &\ldots & v_k & \ldots & v_d\\ 0 &\ldots & 0 & \ldots & 0\\ \vdots & \ddots & \vdots & \ddots & \vdots \\ 0 &\ldots & 0 & \ldots & 0
\end{bmatrix}, \hspace{1cm} b_2=0,
\end{equation*}
where $\alpha_1>0$ and $v=(v_1,\ldots, v_d)\in(\mathbb{R}_{>0})^{d}$ is such that $v$ and $x^{(i)}(T_1)-x^{(j)}(T_1)$ are not orthogonal for all $i\neq j$.
Then 
\begin{equation*}
   \frac{\, \mathrm{d}}{\, \mathrm{d} t}\left(x_2^{(i)}-x_2^{(j)}\right)(t)= \alpha_1\left\langle v, \left(x^{(i)}-x^{(j)}\right)(t)\right\rangle \hspace{1cm} \text{ for } t\in[T_1, T_2],
\end{equation*}
hence $x_2^{(i)}(t)-x_2^{(j)}(t)$ has a sign whenever $t-T_1\ll1$, for all $i\neq j$.
Taking $\alpha_1$ sufficiently small and rescaling time appropriately, we deduce $x_2^{(i)}(T_2)\neq x_2^{(j)}(T_2)$ for all $i\neq j$, or equivalently
\begin{equation} \label{eq:cond1step1} 
\Phi^{T_2}_{\theta_2}\left(\Phi^{T_1}_{\theta_1}\left(x^{(i)}\right)\right)_2\neq \Phi^{T_2}_{\theta_2}\left(\Phi^{T_1}_{\theta_1}\left(x^{(j)}\right)\right)_2,\hspace{1cm}\text{ for all }i\neq j.
\end{equation}
Consider the backward equation
\begin{equation*}
\begin{cases}
\dot y(t)=\overline{w}(t)(y(t))_++\overline{b}(t)  &t\in[T_{4n+1},T],\\
y(T)=y_0,
\end{cases}
\end{equation*}
and note that $y$ also solves the forward equation \eqref{eq: node} for $\theta(t)=-\overline{\theta}(T-t)$.

Take $\theta_{4n+3}=(w_{4n+3},b_{4n+3})$ with
\begin{equation*} 
w_{4n+3}=0, \hspace{1cm} b_{4n+3}=-\beta_2 {\bf1}
\end{equation*}
for $\beta_2>0$ large enough to ensure
\begin{equation*}
    y_k^{(i)}(T_{4n+2})>0 \hspace{1cm} \text{ for all } (k, i)\in[d]\times[n].
\end{equation*}
Take $\theta_{4n+2}=(w_{4n+2},b_{4n+2})$ with
\begin{equation*} 
w_{4n+2}=-\alpha_2\begin{bmatrix} u_1 & \ldots & u_k & \ldots & u_d  \\ 0 & \ldots & 0 & \ldots & 0\\ \vdots & \ddots & \vdots & \ddots & \vdots \\ 0 & \ldots & 0 & \ldots & 0
\end{bmatrix}, \hspace{1cm} b_{4n+2}=0
\end{equation*}
where $\alpha_2>0$ and $u=(u_1,\ldots,u_d)\in(\mathbb{R}_{>0})^d$ is such that $u$ and $y^{(i)}-y^{(j)}$ are not orthogonal for all $i\neq j$. Setting
\begin{equation*}
    \overline{y}^{(i)} \coloneqq \Phi^{T_{4n+1}}_{-\theta_{4n+2}}\left(\Phi^{T_{4n+2}}_{-\theta_{4n+3}}\left(y^{(i)}\right)\right)
\end{equation*}
for $i\in[n]$, arguing just as above, by choosing $\alpha_2$ sufficiently small, it ensues
\begin{equation*}  
\overline{y}^{(i)}_1\neq \overline{y}^{(j)}_1 \hspace{1cm} \text{ for all } i\neq j.
\end{equation*}

\subsubsection*{Step 2.}

Due to \eqref{eq:cond1step1}, we can relabel all points as to have 
\begin{equation} \label{eq: order.second}
    x_2^{(i)}(T_2)<x_2^{(i+1)}(T_2) \hspace{1cm} \text{ for all } i\in[n-1].
\end{equation}
We now show that 
\begin{equation}\label{eq:condstep2}
    x_1^{(i)}(T_{2n+1}) = \overline{y}_1^{(i)} + c_1
\end{equation}
for some $c_1\in\mathbb{R}$ and all $i\in[n]$. We argue by induction. 
The base case $i=1$ trivially holds with $\theta_3 \equiv 0 $ and $c_1=x_1^{(1)}-\overline{y}_1^{(1)}$, at time $T_3$. Of course, since the points don't move, the order \eqref{eq: order.second} is propagated up to time $T_3$. Assume \eqref{eq: order.second} and \eqref{eq:condstep2} hold at time $T_{2n-1}$ for all $i\in[n-1]$. 
Take $\theta_{2n}=(w_{2n},b_{2n})$ with 
\begin{equation*}
    w_{2n}=0,\hspace{1cm} b_{2n}=-\alpha_3\operatorname{sign}\left(x_2^{(n-1)}(T_{2n-1})\right)e_2,
\end{equation*}
where $\alpha_3>0$. Then for all $i\in[n]$, %\footnote{\color{red}\text{Is this ok? the ``for $k\neq 2$'' is not aligned}}
\begin{equation*}
\begin{aligned}
x_2^{(i)} (t) &= x_2^{(i)}(T_{2n-1}) - \alpha_3 t\operatorname{sign}\left(x_2^{(n-1)}(T_{2n-1})\right), \\
x^{(i)}_k (t) &= x^{(i)}_k (T_{2n-1}) \hspace{3cm} \text{ for } k \neq 2,
\end{aligned}
\quad \text{for } t \in [T_{2n-1}, T_{2n}].
\end{equation*}
By virtue of the order \eqref{eq: order.second} which holds at time $T_{2n-1}$ by heredity, we can choose $\alpha_3$ as to ensure 
\begin{equation}\label{eq:cond2step2}
    x_2^{(1)}(T_{2n})<\ldots < x_2^{(n-1)}(T_{2n}) < 0 < x_2^{(n)}(T_{2n}).
\end{equation}
All the while,
\begin{equation*}
    x_k^{(i)} (T_{2n}) = x_k^{(i)} (T_{2n-1})
\end{equation*}
for all $k\neq 2$ and $i\in[n]$. Now take $\theta_{2n+1}=(w_{2n+1},b_{2n+1})$ with 
\begin{equation*}w_{2n+1}=\frac{4n+3}{Tx_2^{(n)}(T_{2n})}\begin{bmatrix} 0 & \omega & 0 & \cdots & 0 \\
0 & 0 & 0 & \cdots & 0 \\
\vdots & \vdots & \vdots & \ddots & \vdots  \\
0 & 0 & 0 & \cdots & 0
\end{bmatrix}, \hspace{1cm}  b_{2n+1}=0,
\end{equation*}
where \begin{equation*}
    \omega =\overline{y}_1^{(n)}-x_1^{(n)}(T_{2n})+c_1.
\end{equation*}
Because of \eqref{eq:cond2step2} we see that $x_1^{(n)}(T_{2n+1}) = \overline{y}_1^{(n)} + c_1$ holds, and we moreover have $x^{(i)}_1(T_{2n+1}) = x_1^{(i)}(T_{2n}) = \overline{y}_1^{(i)} + c_1$ for $i\in[n-1]$ because of the heredity assumption. All in all, we conclude that \eqref{eq:condstep2} holds.

\subsubsection*{Step 3.}
We will now apply this argument simultaneously to all coordinates. 
We relabel all points anew as to have 
\begin{equation} \label{eq: order.first}
    x_1^{(i)}(T_{2n+1})<x_1^{(i+1)}(T_{2n+1})
\end{equation}
for all $i\in[n-1]$. We show that 
\begin{equation}\label{eq:condstep3}
    x^{(i)}(T_{4n}) = \overline{y}^{(i)} + c
\end{equation}
for some $c\in\mathbb{R}^d$ and all $i\in[n]$. We again argue by induction. 
The base case trivially holds at time $T_{2n+2}$ by taking $\theta_{2n+2} \equiv 0 $ and $c_k=x_k^{(1)}(T_{2n+1})-\overline y_k^{(1)}$ for all $k\in[d]$. Again, since the points don't move, the order \eqref{eq: order.first} is propagated up to time $T_{2n+2}$. Assume that \eqref{eq: order.first} and \eqref{eq:condstep3} hold at time $T_{4n-2}$ for all $i\in[n-1]$. 
Take $\theta_{4n-1}=(w_{4n-1},b_{4n-1})$ with 
\begin{equation*}
    w_{4n-1}=0,\hspace{1cm} b_{4n-1}=-\alpha_4\operatorname{sign}\left(x_1^{(n-1)}(T_{4n-2})\right)e_1
\end{equation*}
for $\alpha_4>0$.
Then for all $i\in[n]$,
\begin{equation*}
\begin{aligned}
x_1^{(i)} (t) &= x_1^{(i)}(T_{4n-2})- \alpha_4 t \operatorname{sign}\left(x_1^{(n-1)}(T_{4n-2})\right), \\
x^{(i)}_k (t) &= x^{(i)}_k (T_{4n-2}), \hspace{2.8cm} \text{ for } k \neq 1,
\end{aligned}
\quad \text{for } t \in [T_{4n-2}, T_{4n-1}].
\end{equation*}
By virtue of \eqref{eq: order.first} which holds at time $T_{4n-2}$ by heredity, we can choose $\alpha_4$ as to ensure
\begin{equation}\label{eq:cond2step3}
  x_1^{(1)}(T_{4n-1}) < \ldots <  x_1^{(n-1)}(T_{4n-1}) < 0 < x_1^{(n)}(T_{4n-1}).
\end{equation}
All the while 
\begin{equation*}
    x_k^{(i)} (T_{4n-1}) = x_k^{(i)} (T_{4n-2})
\end{equation*}
for all $k\neq 1$ and $i\in[n]$. Now take $\theta_{4n}=(w_{4n},b_{4n})$ as 
\begin{equation*}w_{4n}=\frac{4n+3}{Tx_1^{(n)}(T_{4n-1})}\begin{bmatrix} 0 & 0 & \cdots & 0 \\
\omega_1 & 0 & \cdots & 0 \\
\vdots & \vdots & \ddots & \vdots  \\
\omega_{d-1} & 0 & \cdots & 0
\end{bmatrix}, \hspace{1cm}  b_{4n}=0,
\end{equation*}
where \begin{equation*}
    \omega_k = \overline{y}_k^{(n)}-x_k^{(n)}(T_{4n-1})+c_k
\end{equation*}
for $k\in[d-1]$. Because of \eqref{eq:cond2step3}, we see that 
$x^{(n)}(T_{4n}) = \overline{y}^{(n)} + c$, and we moreover have  $ x^{(i)}(T_{4n}) = x^{(i)}(T_{4n-1}) = \overline{y}^{(i)} + c$ for $i\in[n-1]$ because of the heredity assumption. All in all, we conclude that \eqref{eq:condstep3} holds. 

We can conclude the proof by taking $\theta_{4n+1}=(w_{4n+1},b_{4n+1})$ with
\begin{equation*}
w_{4n+1}=0,\hspace{1cm} b_{4n+1}=-c.\tag*{\qed}
\end{equation*}

\subsection{\texorpdfstring{Proof of \Cref{lemLTA}}{Proof of}}

Let $Y\in\mathscr{M}_{d\times n}(\mathbb{R})$ be the matrix whose $i$-th column, for $i\in[n]$, is $y^{(i)}\in\mathbb{R}^d$. We define $\gamma:[0,1]\times\mathscr{M}_{d\times n}(\mathbb{R})\to\mathscr{M}_{d\times n}(\mathbb{R})$ by
\begin{equation*}\gamma(s, X)=(1-s) X+ sY.
\end{equation*}
There exists $\varepsilon>0$ such for all $X\in\mathscr{M}_{d\times n}(\mathbb{R})$ with $\|Y-X\|_1\leq\varepsilon$, we have
\begin{equation}
\label{eq: condrange.2}
\operatorname{rank}(\upsigma(\gamma(s, X)))=n
\end{equation}
for all $s\in[0,1]$. Take any $X\in\mathscr{M}_{d\times n}(\mathbb{R})$ as above, and then consider the map 
$\mathscr{G}:[0,1]\times\mathscr{M}_{d\times d}(\mathbb{R})\to\mathscr{M}_{d\times n}(\mathbb{R})$ defined by
\begin{equation*}
\mathscr{G}(s, w)= w\upsigma(\gamma(s, X)),
\end{equation*}
with $\upsigma$ being applied element-wise to the matrix.
Observe that $\mathscr{G}(s,\cdot)$ is a linear map for any $s\in[0,1]$, and \eqref{eq: condrange.2} ensures that $\mathscr{G}(s, \cdot)$ is surjective. Therefore $\mathscr{G}(s,\cdot)$ has a right inverse $\mathscr{F}(s,\cdot):\mathscr{M}_{d\times n}(\mathbb{R})\to\mathscr{M}_{d\times d}(\mathbb{R})$, also a linear map, which takes the form 
\begin{equation*}
\mathscr{F}(s, v)= \operatorname*{argmin}_{w\in \operatorname{ker}(\mathscr{G}\left(s,\cdot)-v\right)}\|w\|_2.
\end{equation*}
The family of linear operators $(\mathscr{F}(s,\cdot))_{s\in[0,1]}$ is uniformly bounded in operator norm, that is to say, 
\begin{equation}\label{eq:unifbound}
\max_{s\in[0,1]}\|\mathscr{F}(s,\cdot)\|_{\mathscr{L}(\mathscr{M}_{d\times n}(\mathbb{R});\mathscr{M}_{d\times d}(\mathbb{R}))}\leq C
\end{equation}
for some constant $C>0$. For $t\in[0,T]$ set
\begin{equation*}
w(t)\coloneqq\mathscr{F}\left(\frac{t}{T},\frac{Y-X}{T}\right).
\end{equation*}
For $i\in[n]$, the $i$-th column $x^i(t)\in\mathbb{R}^d$ of 
\begin{equation*}
X(t)\coloneqq\gamma\left(\frac{t}{T}, X\right)=\left(1-\frac{t}{T}\right)X+\frac{t}{T}Y    
\end{equation*} 
satisfies
\begin{align*}
\begin{cases}
    \dot x^i(t)=w(t)\upsigma(x^i(t))\hspace{1cm}\text{for }t\in[0,T],\\
    x^i(0)=x^{(i)},\\
    x^i(T)=y^{(i)}.
\end{cases}
\end{align*}
By virtue of \eqref{eq:unifbound},
\begin{equation*}
\|w(t)\|_2=\left\|\mathscr{F}\left(\frac{t}{T},\frac{Y-X}{T}\right)\right\|_2\lesssim\frac{C}{T}\|Y-X\|_1,
\end{equation*}
as desired.\qed

\bibliographystyle{alpha}
\bibliography{biblio}

\newcommand{\etalchar}[1]{$^{#1}$}
\begin{thebibliography}{PBHDE{\etalchar{+}}23}

\bibitem[ABBG{\etalchar{+}}12]{alabau2012notes}
Fatiha Alabau-Boussouira, Roger Brockett, Olivier Glass, J{\'e}r{\^o}me
  Le~Rousseau, Enrique Zuazua, and Roger Brockett.
\newblock Notes on the control of the liouville equation.
\newblock {\em Control of Partial Differential Equations: Cetraro, Italy 2010,
  Editors: Piermarco Cannarsa, Jean-Michel Coron}, pages 101--129, 2012.

\bibitem[ABVE23]{albergo2023stochastic}
Michael~S Albergo, Nicholas~M Boffi, and Eric Vanden-Eijnden.
\newblock Stochastic interpolants: A unifying framework for flows and
  diffusions.
\newblock {\em arXiv preprint arXiv:2303.08797}, 2023.

\bibitem[{\'A}LHSZ24]{alvarez2024interplay}
Antonio {\'A}lvarez-L{\'o}pez, Arselane Hadj~Slimane, and Enrique Zuazua.
\newblock Interplay between depth and width for interpolation in neural odes.
\newblock {\em Neural Networks}, 180:106640, 2024.

\bibitem[AS22]{agrachev2022control}
Andrei Agrachev and Andrey Sarychev.
\newblock Control on the manifolds of mappings with a view to the deep
  learning.
\newblock {\em Journal of Dynamical and Control Systems}, 28(4):989--1008,
  2022.

\bibitem[Aza13]{azagra_global_2013}
Daniel Azagra.
\newblock Global and fine approximation of convex functions.
\newblock {\em Proceedings of the London Mathematical Society},
  107(4):799--824, October 2013.

\bibitem[BKM17]{blei2017variational}
David~M Blei, Alp Kucukelbir, and Jon~D McAuliffe.
\newblock Variational inference: A review for statisticians.
\newblock {\em Journal of the American statistical Association},
  112(518):859--877, 2017.

\bibitem[BP24]{barcena2024optimal}
Jon~Asier B{\'a}rcena-Petisco.
\newblock Optimal control for neural ode in a long time horizon and
  applications to the classification and simultaneous controllability problems.
\newblock 2024.

\bibitem[CLLS23]{cheng2023interpolation}
Jingpu Cheng, Qianxiao Li, Ting Lin, and Zuowei Shen.
\newblock Interpolation, approximation and controllability of deep neural
  networks.
\newblock {\em arXiv preprint arXiv:2309.06015}, 2023.

\bibitem[CLT20]{cuchiero2020deep}
Christa Cuchiero, Martin Larsson, and Josef Teichmann.
\newblock Deep neural networks, generic universal interpolation, and controlled
  odes.
\newblock {\em SIAM Journal on Mathematics of Data Science}, 2(3):901--919,
  2020.

\bibitem[CRBD18]{chen2018neural}
Ricky~TQ Chen, Yulia Rubanova, Jesse Bettencourt, and David~K Duvenaud.
\newblock Neural ordinary differential equations.
\newblock {\em Advances in Neural Information Processing Systems}, 31, 2018.

\bibitem[DKB14]{dinh2014nice}
Laurent Dinh, David Krueger, and Yoshua Bengio.
\newblock Nice: Non-linear independent components estimation.
\newblock {\em arXiv preprint arXiv:1410.8516}, 2014.

\bibitem[DMR19]{duprez2019approximate}
Michel Duprez, Morgan Morancey, and Francesco Rossi.
\newblock Approximate and exact controllability of the continuity equation with
  a localized vector field.
\newblock {\em SIAM Journal on Control and Optimization}, 57(2):1284--1311,
  2019.

\bibitem[E17]{weinan2017proposal}
Weinan E.
\newblock A proposal on machine learning via dynamical systems.
\newblock {\em Communications in Mathematics and Statistics}, 1(5):1--11, 2017.

\bibitem[EGBO22]{elamvazhuthi2022neural}
Karthik Elamvazhuthi, Bahman Gharesifard, Andrea~L Bertozzi, and Stanley Osher.
\newblock Neural ode control for trajectory approximation of continuity
  equation.
\newblock {\em IEEE Control Systems Letters}, 6:3152--3157, 2022.

\bibitem[GCB{\etalchar{+}}18]{grathwohl2018ffjord}
Will Grathwohl, Ricky~TQ Chen, Jesse Bettencourt, Ilya Sutskever, and David
  Duvenaud.
\newblock Ffjord: Free-form continuous dynamics for scalable reversible
  generative models.
\newblock {\em arXiv preprint arXiv:1810.01367}, 2018.

\bibitem[GRRB24]{geshkovski2024measure}
Borjan Geshkovski, Philippe Rigollet, and Dom{\`e}nec Ruiz-Balet.
\newblock Measure-to-measure interpolation using transformers.
\newblock {\em arXiv preprint arXiv:2411.04551}, 2024.

\bibitem[GRVE22]{gabrie2022adaptive}
Marylou Gabri{\'e}, Grant~M Rotskoff, and Eric Vanden-Eijnden.
\newblock Adaptive monte carlo augmented with normalizing flows.
\newblock {\em Proceedings of the National Academy of Sciences},
  119(10):e2109420119, 2022.

\bibitem[HZRS16]{he2016deep}
Kaiming He, Xiangyu Zhang, Shaoqing Ren, and Jian Sun.
\newblock Deep residual learning for image recognition.
\newblock In {\em Proceedings of the IEEE conference on computer vision and
  pattern recognition}, pages 770--778, 2016.

\bibitem[JCP23]{jiang2023algorithms}
Yiheng Jiang, Sinho Chewi, and Aram-Alexandre Pooladian.
\newblock Algorithms for mean-field variational inference via polyhedral
  optimization in the wasserstein space.
\newblock {\em arXiv preprint arXiv:2312.02849}, 2023.

\bibitem[KPB20]{kobyzev2020normalizing}
Ivan Kobyzev, Simon~JD Prince, and Marcus~A Brubaker.
\newblock Normalizing flows: An introduction and review of current methods.
\newblock {\em IEEE transactions on pattern analysis and machine intelligence},
  43(11):3964--3979, 2020.

\bibitem[LCBH{\etalchar{+}}22]{lipman2022flow}
Yaron Lipman, Ricky~TQ Chen, Heli Ben-Hamu, Maximilian Nickel, and Matt Le.
\newblock Flow matching for generative modeling.
\newblock {\em arXiv preprint arXiv:2210.02747}, 2022.

\bibitem[LLS22]{li2022deep}
Qianxiao Li, Ting Lin, and Zuowei Shen.
\newblock Deep learning via dynamical systems: An approximation perspective.
\newblock {\em Journal of the European Mathematical Society}, 25(5):1671--1709,
  2022.

\bibitem[MRWZ24]{marzouk2023distribution}
Youssef Marzouk, Zhi~Robert Ren, Sven Wang, and Jakob Zech.
\newblock Distribution learning via neural differential equations: a
  nonparametric statistical perspective.
\newblock {\em Journal of Machine Learning Research}, 25(232):1--61, 2024.

\bibitem[PBHDE{\etalchar{+}}23]{pooladian2023multisample}
Aram-Alexandre Pooladian, Heli Ben-Hamu, Carles Domingo-Enrich, Brandon Amos,
  Yaron Lipman, and Ricky~TQ Chen.
\newblock Multisample flow matching: straightening flows with minibatch
  couplings.
\newblock In {\em Proceedings of the 40th International Conference on Machine
  Learning}, pages 28100--28127, 2023.

\bibitem[PNR{\etalchar{+}}21]{papamakarios2021normalizing}
George Papamakarios, Eric Nalisnick, Danilo~Jimenez Rezende, Shakir Mohamed,
  and Balaji Lakshminarayanan.
\newblock Normalizing flows for probabilistic modeling and inference.
\newblock {\em Journal of Machine Learning Research}, 22(57):1--64, 2021.

\bibitem[PW24]{Polyanskiy_Wu_2024}
Yury Polyanskiy and Yihong Wu.
\newblock {\em Information Theory: From Coding to Learning}.
\newblock Cambridge University Press, Cambridge, 2024.

\bibitem[Rag24]{raginsky2024some}
Maxim Raginsky.
\newblock {Some Remarks on Controllability of the Liouville Equation}.
\newblock {\em arXiv preprint arXiv:2404.14683}, 2024.

\bibitem[RBZ23]{ruiz2023neural}
Domenec Ruiz-Balet and Enrique Zuazua.
\newblock Neural ode control for classification, approximation, and transport.
\newblock {\em SIAM Review}, 65(3):735--773, 2023.

\bibitem[RBZ24]{ruiz2024control}
Dom{\`e}nec Ruiz-Balet and Enrique Zuazua.
\newblock Control of neural transport for normalising flows.
\newblock {\em Journal de Math{\'e}matiques Pures et Appliqu{\'e}es},
  181:58--90, 2024.

\bibitem[RM15]{rezende2015variational}
Danilo Rezende and Shakir Mohamed.
\newblock Variational inference with normalizing flows.
\newblock In {\em International conference on machine learning}, pages
  1530--1538. PMLR, 2015.

\bibitem[Sas15]{Sason2015OnRP}
Igal Sason.
\newblock {On Reverse Pinsker Inequalities}.
\newblock {\em arXiv preprint arXiv:1503.07118}, 2015.

\bibitem[Sca23]{scagliotti2021deep}
Alessandro Scagliotti.
\newblock Deep learning approximation of diffeomorphisms via linear-control
  systems.
\newblock {\em Mathematical Control and Related Fields}, 13(3):1226--1257,
  2023.

\bibitem[SF23]{scagliotti2023normalizing}
Alessandro Scagliotti and Sara Farinelli.
\newblock Normalizing flows as approximations of optimal transport maps via
  linear-control neural odes.
\newblock {\em arXiv preprint arXiv:2311.01404}, 2023.

\bibitem[SV16]{sason2016f}
Igal Sason and Sergio Verd{\'u}.
\newblock $ f $-divergence inequalities.
\newblock {\em IEEE Transactions on Information Theory}, 62(11):5973--6006,
  2016.

\bibitem[TG22]{tabuada2022universal}
Paulo Tabuada and Bahman Gharesifard.
\newblock Universal approximation power of deep residual neural networks
  through the lens of control.
\newblock {\em IEEE Transactions on Automatic Control}, 2022.

\bibitem[TT13]{tabak2013family}
Esteban~G Tabak and Cristina~V Turner.
\newblock A family of nonparametric density estimation algorithms.
\newblock {\em Communications on Pure and Applied Mathematics}, 66(2):145--164,
  2013.

\bibitem[TVE10]{tabak2010density}
Esteban~G. Tabak and Eric Vanden-Eijnden.
\newblock {Density estimation by dual ascent of the log-likelihood}.
\newblock {\em Communications in Mathematical Sciences}, 8(1):217 -- 233, 2010.

\bibitem[TW16]{tomczak2016improving}
Jakub~M Tomczak and Max Welling.
\newblock Improving variational auto-encoders using {H}ouseholder flow.
\newblock {\em arXiv preprint arXiv:1611.09630}, 2016.

\bibitem[VEH14]{van2014renyi}
Tim Van~Erven and Peter Harremos.
\newblock {R{\'e}nyi divergence and Kullback-Leibler divergence}.
\newblock {\em IEEE Transactions on Information Theory}, 60(7):3797--3820,
  2014.

\bibitem[Ver14]{verdu2014total}
Sergio Verd{\'u}.
\newblock Total variation distance and the distribution of relative
  information.
\newblock In {\em 2014 information theory and applications workshop (ITA)},
  pages 1--3. IEEE, 2014.

\bibitem[WJ08]{wainwright2008graphical}
Martin~J Wainwright and Michael~I Jordan.
\newblock Graphical models, exponential families, and variational inference.
\newblock {\em Foundations and Trends{\textregistered} in Machine Learning},
  1(1--2):1--305, 2008.

\end{thebibliography}

\newpage
    
	\bigskip

\begin{minipage}[t]{.5\textwidth}
  {\footnotesize{\bf Antonio \'Alvarez-L\'opez}\par
  Departamento de Matem\'aticas\par
  Universidad Aut\'onoma de Madrid\par
  C/ Francisco Tom\'as y Valiente 7, \par
  28049 Madrid, Spain\par
 \par
  e-mail: \href{mailto:blank}{\textcolor{blue}{\scriptsize antonio.alvarezl@uam.es}}
  }
\end{minipage}% This must go next to `\end{minipage}`
\begin{minipage}[t]{.5\textwidth}
{\footnotesize{\bf Borjan Geshkovski}\par
  Inria \&
  Laboratoire Jacques-Louis Lions\par
  Sorbonne Université\par
  4 Place Jussieu\par
  75005 Paris, France\par
 \par
  e-mail: \href{mailto:borjan@mit.edu}{\textcolor{blue}{\scriptsize borjan.geshkovski@inria.fr}}
  }
\end{minipage}%

\begin{center}
\begin{minipage}[t]{.5\textwidth}
  {\footnotesize{\bf Domènec Ruiz-Balet}\par
  CEREMADE, UMR CNRS 7534\par
  Université Paris-Dauphine, Université PSL\par
  Pl. du Maréchal de Lattre de Tassigny\par
  75016 Paris, France\par
 \par
  e-mail: \href{mailto:blank}{\textcolor{blue}{\scriptsize domenec.ruiz-i-balet@dauphine.psl.eu}}
  }
\end{minipage}%
\end{center}

\end{document}